\newtheorem{theorem}{Theorem}[section]
\newtheorem*{theorem*}{Theorem}
\newtheorem{lemma}[theorem]{Lemma}
\newtheorem{proposition}[theorem]{Proposition}
\theoremstyle{definition}
\newtheorem{definition}{Definition}[section]
\theoremstyle{remark}
\newtheorem{remark}{Remark}[section]
\numberwithin{equation}{section}
\newcommand{\R}{\ensuremath{\mathbb{R}}}
\newcommand{\br}{\ensuremath{\mathbf{r}}}
\newcommand{\N}{\ensuremath{\mathbb{N}}}
\newcommand{\C}{\ensuremath{\mathbb{C}}}
\renewcommand{\S}{\ensuremath{\mathbb{S}}}
\newcommand{\sym}{\ensuremath{\mathrm{sym}}}
\renewcommand*{\div}{\textrm{div}}
\newcommand{\norm}[1]{\left\lVert#1\right\rVert}
\newcommand\restrl{\mathop{\hbox{\vrule height .3pt width 5pt depth 0pt\vrule height 7pt width .3pt depth 0pt}}\nolimits}
\title[Isometric immersion and Darboux equation]{Isometric immersions and weak solutions to the Darboux equation}
\author{Wentao Cao, Jonas Hirsch, and Dominik Inauen}
\address{Wentao Cao, Academy for Multidisciplinary Studies, Capital Normal University, West 3rd Ring North Road 105, Beijing, 100048 P.R. China. E-mail:{\tt cwtmath@cnu.edu.cn}}
\address{Jonas Hirsch, Institut f\"{u}r Mathematik, Universit\"{a}t Leipzig, D-04109, Leipzig, Germany.  E-mail:{\tt jonas.hirsch@math.uni-leipzig.de}}
\address{Dominik Inauen, Institut f\"{u}r Mathematik, Universit\"{a}t Leipzig, D-04109, Leipzig, Germany.  E-mail:{\tt dominik.inauen@math.uni-leipzig.de}}
\begin{document}
\begin{abstract}
    We study the Darboux equation, a fundamental PDE arising in the theory of isometric immersions of two-dimensional Riemannian manifolds into $\mathbb{R}^3$, in the low-regularity regime. We introduce a notion of weak solution for $u\in C^{1,\theta}$ with $\theta>1/2$, and show that the classical correspondence between solutions of the Darboux equation and isometric immersions remains valid in this regime. The key ingredient is an extension of the classical flatness criterion to Hölder continuous metrics, achieved via an analysis of a weak notion of Gaussian curvature.
\end{abstract}
\maketitle

\section{Introduction}

It is well-known that sufficiently regular (e.g., $C^2$ or $C^3$) isometric immersions (i.e., length-preserving immersions from Riemannian manifolds $(M,g)$ into Euclidean space) automatically satisfy higher-order equations that relate intrinsic to extrinsic geometry, such as the Gauss equation or the Codazzi-Mainardi system. One such equation, which serves as the central object of study in this paper, is the Darboux equation
\begin{equation}\label{eq-darb}
\det \nabla_g^2 u = K_g(1-|du|_g^2).
\end{equation}
Here, $g$ is a $C^2$ Riemannian metric on a domain $\Omega \subset \mathbb{R}^2$, $K_g$ denotes its Gaussian curvature, $u:\Omega\to\mathbb{R}$ is a $C^2$ function, and $\nabla_g^2u$ is the ``Riemannian Hessian" of $u$, i.e., the $(1,1)$-tensor obtained from the covariant Hessian $\nabla du$ by raising an index,  see Section \ref{s:hessian}.

Equation \eqref{eq-darb} was first derived by Darboux in 1894 \cite{Darboux} (see also \cite{Hong1999, Khuri2007} for later developments and related curvature equations). The connection between \eqref{eq-darb} and isometric immersions is given by the following correspondence (see \cite{Han-Hong-book}): if $\br:\Omega\to \R^3$ is an isometric immersion of class $C^2$, then the function $u=\mathbf{r}\cdot \mathbf{e}$ solves equation \eqref{eq-darb} for each $\mathbf{e}\in\S^2$, a fact which follows a direct computation using the Gauss equation for $\br$.  Conversely, given a smooth solution $u$ of \eqref{eq-darb} with $|du|_g<1$, one can construct a smooth mapping $\phi:\Omega\to\mathbb{R}^2$ such that $r=(\phi,u)$ defines a smooth isometric immersion of $(\Omega,g)$ into $\mathbb{R}^3$. This correspondence establishes the Darboux equation as a fundamental tool for studying the existence and properties of smooth isometric immersions.

In general, the validity of higher-order equations relating intrinsic and extrinsic geometry typically leads to uniqueness (also called rigidity) properties of isometric immersions—as exemplified by classical results for the Weyl problem \cite{CohnVossen1927,Herg1943}—or to non-existence results such as those of Hilbert \cite{Hilbert} and Efimov \cite{Efimov} for isometric immersions of class $C^2$ and higher.

By contrast, the celebrated Nash–Kuiper theorem \cite{Nash1954, Kui1955} established the counterintuitive abundance and flexibility of isometric immersions of class $C^1$: any short immersion can be uniformly approximated by isometric $C^1$ immersions, leading to the existence of highly oscillatory and non-unique isometries. This contrast naturally leads to the question of whether there exists a critical H\"older threshold $\theta_0$ that distinguishes the flexible regime ($C^{1,\theta}$ with $\theta<\theta_0$) from the rigid regime ($\theta>\theta_0$).

In recent years, significant work has been devoted to understanding this regime. The Nash-Kuiper theorem has been extended to $C^{1,\theta}$ isometric immersions for various ranges of $\theta$ in the works \cite{CDS2012, DIS2018, DI2020, CaoSz2022, CHI2025,Borisov2004}. On the other hand, rigidity has been shown to persist at higher regularity: the solution of the Weyl problem remains rigid in the class $C^{1,\theta}$ for $\theta > 2/3$ \cite{CDS2012,Borisov1958,Pakzad}; see also \cite{DeLellisPakzad} for a related rigidity result in the flat case.  Analogous flexibility and rigidity phenomena have been shown to hold for very weak solutions to the Monge–Amp\`ere equation, see \cite{LewickaPakzad, CaoSz, CHI, MartaSystems,Martasystems3,IL25,InLew25}. Note that these very weak solutions are defined through the very weak Hessian (cf. \cite{Iwaniec}) and are therefore different from our Definition \ref{de-weak-darboux} below, even though the Darboux equation is of Monge–Amp\`ere type.

In light of this flexibility-rigidity dichotomy, a natural and important question arises: does a suitable weak notion of higher-order constraint equations relating intrinsic and extrinsic geometry continue to hold in the low-regularity regime $C^{1,\theta}$ for sufficiently large $\theta$? Indeed, such weak constraint equations can serve as the key technical tools for establishing rigidity results. For instance, the central ingredient in the proof of the rigidity theorem in the class $C^{1,\theta}$ for $\theta > 2/3$ given in \cite{CDS2012} is the validity of an integrated version of the Theorema Egregium, $K_g  dV_g = N^*(dV_{\S^2})$, in this regularity regime. Similarly, in \cite{DI2020, CaoIn24}, a weak variant of the Gauss equation was shown to hold for isometries of class $C^{1,\theta}$ with $\theta > 1/2$, yielding certain rigidity properties for isometries with ``fixed boundary values".

Given this context and the fundamental connection between \eqref{eq-darb} and smooth isometric immersions, the goal of this paper is to extend the correspondence between isometric immersions and solutions to the Darboux equation to the low-regularity regime $C^{1,\theta}$. In this setting, the Riemannian Hessian $\nabla^2_g u$ is not classically defined, so we must introduce a suitable notion of weak solutions to \eqref{eq-darb}. It turns out, see Proposition \ref{pr:weak-darboux} below, that this is possible when $\theta>1/2$. 
\begin{definition}[Weak solutions to the Darboux equation]\label{de-weak-darboux}
    Let $\Omega\subset\R^2$ be a simply connected, bounded domain, and let  $g\in C^{2}(\bar \Omega; \R^{2\times2}_\sym)$ be a Riemannian metric with Gaussian curvature $K_g$. A function $u\in C^{1,\theta}( \bar \Omega)$ with $\theta>1/2$  is a weak solution of the Darboux equation \eqref{eq-darb} if $|du|_g<1$  and 
    \begin{equation}\label{eq-weak-darb}
    \frac{\det(\nabla^2_g u )}{(1-|du|_g^2)^{3/2}} dV_g = \frac{K_g}{(1-|du|_g^2)^{1/2}} dV_g
    \end{equation}
    as distributions on $\Omega$. 
\end{definition}
As we show in Proposition \ref{pr:weak-darboux}, this definition is well-posed. The main result of our paper is the following equivalence between $C^{1,\theta}$ isometric immersions and $C^{1,\theta}$ weak solutions to the Darboux equation.
\begin{theorem}\label{th-main}
    Let  $\theta>1/2$ be a H\"older exponent and let $g\in C^2( \bar{\Omega}; \R^{2\times 2}_\sym)$ be a metric on a simply connected and bounded domain $\Omega\subset\R^2$. Then the following two statements hold.
\begin{itemize}
    \item[(1)] If $\br\in C^{1,\theta}( \bar{\Omega};\R^3)$ is an isometric immersion of $( {\Omega}, g)$ into $\R^3$ and $u=\br\cdot \mathbf{e}$ satisfies $|du|_g<1$ for some unit vector $\mathbf{e}\in\S^2$, then $u\in C^{1,\theta}( \bar{\Omega})$ is a weak solution to the Darboux equation \eqref{eq-darb}.
    \item[(2)] If $u\in C^{1,\theta}( \bar{\Omega})$ is a weak solution to the Darboux equation \eqref{eq-darb}, then there exists $\phi\in C^{1, \theta}( \bar{\Omega};\R^2)$ such that the mapping $\br=(\phi, u): \Omega\to\R^3$ is an isometric immersion of $( \Omega, g)$ into $\R^3$.
\end{itemize}
\end{theorem}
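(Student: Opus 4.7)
The overall plan rests on a single identity: for sufficiently regular $u$ with $|du|_g<1$, the bilinear form $h := g - du\otimes du$ is a Riemannian metric whose Gaussian curvature satisfies
\[
K_h\, dV_g \;=\; \frac{K_g}{\sqrt{1-|du|_g^2}}\, dV_g \;-\; \frac{\det \nabla^2_g u}{(1-|du|_g^2)^{3/2}}\, dV_g.
\]
Thus the classical Darboux equation \eqref{eq-darb} is equivalent to the vanishing of $K_h$, and the weak formulation \eqref{eq-weak-darb} is precisely the distributional version of this vanishing. The theorem therefore reduces to the two directions of a weak flatness correspondence for $C^{0,\theta}$ metrics.

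For Part (2), I would set $h = g - du\otimes du$, which is a positive-definite metric of class $C^{0,\theta}$ by the hypothesis $|du|_g<1$. The weak Darboux equation, combined with the identity above interpreted distributionally, implies that the weak Gaussian curvature of $h$ vanishes. The H\"older flatness criterion announced in the abstract then produces (globally, via simple connectedness) a map $\phi\in C^{1,\theta}(\bar\Omega;\R^2)$ with $\nabla\phi^T\nabla\phi = h$. The mapping $\br=(\phi,u)$ would then satisfy $\nabla\br^T\nabla\br = \nabla\phi^T\nabla\phi + du\otimes du = g$ and is therefore an isometric immersion of class $C^{1,\theta}$.

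For Part (1), after rotating so that $\mathbf{e}=e_3$, I would write $\br=(\phi,u)$ with $\phi:\bar\Omega\to\R^2$ of class $C^{1,\theta}$. The condition $|du|_g<1$ translates to the tangent plane of $\br$ being nowhere parallel to $\mathbf{e}$, which makes $\phi$ itself an immersion. The isometry relation $\nabla\br^T\nabla\br=g$ then gives $\nabla\phi^T\nabla\phi = g - du\otimes du = h$. Since $h$ is the pullback of the Euclidean metric by a $C^{1,\theta}$ immersion, its weak Gaussian curvature vanishes; this easy direction of the flatness correspondence can be verified by mollifying $\phi$ (so that the mollified pullbacks $h_\epsilon$ are classically flat on compact subsets) and passing to the distributional limit, exploiting the uniform $C^{1,\theta}$ bound and the well-definedness of the weak curvature for $\theta>1/2$. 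The identity above then yields the weak Darboux equation for $u$.

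The main obstacle is the H\"older flatness criterion invoked in Part (2): constructing a global $C^{1,\theta}$ development of a weakly flat metric $h\in C^{0,\theta}$. Classically this is done by integrating an orthonormal coframe along paths and using vanishing curvature to kill monodromy on simply connected domains; at H\"older regularity the connection forms and parallel transport must be built and controlled via low-regularity estimates, and the chosen weak notion of Gaussian curvature must still suffice to enforce path-independence. A secondary technical task is to establish the displayed identity rigorously as a distributional equality on $\Omega$ for $u\in C^{1,\theta}$ with $\theta>1/2$, which should follow from approximation arguments parallel to those underlying Proposition~\ref{pr:weak-darboux}.
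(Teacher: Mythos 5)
Your overall reduction coincides with the paper's: both arguments hinge on the identity
$K_h\,dV_h=\frac{K_g}{(1-|du|_g^2)^{1/2}}\,dV_g-\frac{\det\nabla_g^2u}{(1-|du|_g^2)^{3/2}}\,dV_g$
for $h=g-du\otimes du$ (note that the left-hand side of your displayed identity should carry $dV_h$, not $dV_g$), so that the weak Darboux equation is exactly the statement $K_h\,dV_h=0$. Part (1) then follows because $h$ is the pullback of the Euclidean metric under the projection of $\br$ onto $\mathbf{e}^\perp$, and Part (2) from a weak flatness criterion; the approximation steps you outline (mollify, apply Lemma~\ref{le-g-du-k} to the smooth approximants, pass to the limit using the $\theta>1/2$ distributional products) are precisely Proposition~\ref{pr:weak-darboux} and Proposition~\ref{pr-kdv-def}.

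However, the step you yourself flag as ``the main obstacle'' --- producing a global $C^{1,\theta}$ isometric immersion of $(\Omega,h)$ into $\R^2$ from the vanishing of the weak curvature of a metric $h\in C^{0,\theta}$ --- is the central technical content of the theorem, and the route you sketch for it would fail at this regularity. Integrating an orthonormal coframe requires the Levi--Civita connection forms, which involve first derivatives of $h$ and are therefore only distributions of negative order when $h\in C^{0,\theta}$; there is no evident way to define parallel transport along curves, let alone to show that a weak notion of curvature controls its monodromy. The paper circumvents this entirely via conformal coordinates (Proposition~\ref{cor-kdv-0}): the Beltrami equation $\psi_{\bar z}=\mu\psi_z$ involves only $h$ itself and no derivatives, yielding a chart $\psi\in C^{1,\theta}$ with conformal factor $\sigma\in C^{0,\theta}$; the weak curvature is \emph{defined} as the distributional product $2\int\nabla(\varphi\circ\psi^{-1})\cdot\nabla\sigma$ (whose chart-independence itself requires an argument using the holomorphy of transition maps); and its vanishing makes $\sigma$ weakly harmonic, hence smooth by Weyl's lemma, so that the development is given explicitly by $F\circ\psi$ with $F'=e^{f}$ and $\sigma=\Re f$. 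Without this (or an equivalent) mechanism, your proof of Part (2) --- and hence of the theorem --- remains incomplete.
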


The central ingredient in the proof of Theorem~\ref{th-main} is a generalization of the classical flatness criterion for Riemannian surfaces to the low-regularity regime. Classically, a two-dimensional Riemannian manifold $(\Omega,h)$ with smooth metric $h$ is flat (i.e., locally isometric to $(\mathbb{R}^2,e)$ where $e$ is the Euclidean metric) if and only if its Gaussian curvature $K_h$ vanishes identically. In Section~\ref{s:flatness}, we extend this criterion by introducing a weak notion of Gaussian curvature for Hölder continuous metrics $h\in C^{0,\theta}$ with $\theta>1/2$, using conformal coordinates. We then prove that the flatness characterization remains valid in this setting.

With this extension in place, the rest of the proof follows naturally by considering the auxiliary metric $h=g-du^2$, built from the original metric $g$ and the function $u$. In case (1), when $u=\mathbf{r}\cdot \mathbf{e}$ arises from a $C^{1,\theta}$ isometric immersion $\mathbf{r}$, the flatness criterion yields $K_h=0$. An approximation argument together with the classical curvature formula for smooth perturbations $g-dv^2$ then implies that $u$ satisfies \eqref{eq-darb}.
In case (2), where $u$ is a weak solution to the Darboux equation, an approximation  again yields $K_h=0$. The flatness criterion then provides the construction of the  the desired $C^{1,\theta}$ isometric immersion.

The remainder of the paper is organized as follows. Section \ref{s:prelims} introduces the notation used throughout the work. In Section \ref{se-proposition}, we justify the definition of weak solutions by showing that the distribution in \eqref{eq-weak-darb} is well-defined; this relies on a known result concerning distributional products, whose proof is provided in the appendix for completeness. Section \ref{s:conformal} recalls the construction and key properties of conformal coordinates, which play a central role in our analysis. Building on this, Section \ref{s:flatness} develops a weak notion of Gaussian curvature and establishes the extension of the flatness criterion to the low-regularity setting. Finally, Section \ref{se-proof-thm} contains the proof of the main theorem.

\section{Preliminaries}\label{s:prelims}
\subsection{Riemannian Hessian}\label{s:hessian}

Let $g$ be a Riemannian metric on a domain $\Omega \subset \mathbb{R}^2$, and let $u:\Omega \to \mathbb{R}$ be a $C^2$ function. The squared length of the differential $du$ with respect to $g$ is given in local coordinates by
\[
|du|_g^2 = g^{ij}\,\partial_i u \,\partial_j u\,,
\]
where $(g^{ij}) = g^{-1}$ denotes the inverse matrix of $(g_{ij})$, and we use the Einstein summation convention.

The Riemannian Hessian of $u$ is defined as the $(1,1)$-tensor
\[
\nabla_g^2 u : T\Omega \;\to\; T\Omega, 
\qquad 
X \;\mapsto\; \nabla_X \nabla_g u,
\]
where $\nabla_g u$ is the Riemannian gradient of $u$. In coordinates, one has
 \begin{equation}\label{e:hessian}
     \det \nabla_g^2 u = \frac{\det\left(\partial^2_{ij}u-\Gamma_{ij}^k\partial_k u \right)}{\det(g)}\,.
 \end{equation}
with $\Gamma^k_{ij}$ denoting the Christoffel symbols of $g$.

Note that the coefficients
$\partial^2_{ij}u-\Gamma_{ij}^k\partial_k u= \nabla^2 u(\partial_i,\partial_j)$
are precisely those of the covariant Hessian $\nabla^2 u = \nabla du$, a symmetric $(0,2)$-tensor. While some authors (see e.g. \cite{Han-Hong-book}) formulate the Darboux equation \eqref{eq-darb} in terms of $\nabla du$ rather than $\nabla_g^2 u$, we choose the Riemannian Hessian due to the fact that the determinant of a $(1,1)$-tensor is invariantly defined. 

\subsection{A formula for Gaussian curvature}
We also introduce a lemma in \cite{Han-Hong-book} for calculating the Gaussian curvature of perturbed metrics. 
\begin{lemma}[\cite{Han-Hong-book}]\label{le-g-du-k}
Let $(\mathcal{M}, g)$ be a smooth 2-dimensional Riemannian manifold and $u$  a smooth function on $\mathcal{M}$ with $|du|_g<1$. Then $h=g-du^2$ is a smooth Riemannian metric on $\mathcal{M}$ and the Gaussian curvature of $h$ is given by
    \[K_h=\frac{1}{1-|du|_g^2}\left\{K_g-\frac{\det(\nabla du)}{(1-|du|_g^2)\det g}\right\}\,.\]
\end{lemma}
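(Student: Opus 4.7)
The plan is to proceed by a direct computation in local coordinates, with two algebraic preliminaries setting up the rest. First, the matrix determinant lemma gives $\det h = \det g \cdot (1-|du|_g^2)$, so the nondegeneracy of $h$ is equivalent to the hypothesis $|du|_g<1$; the Sherman--Morrison formula then yields the inverse $h^{ij} = g^{ij} + \rho^{-1}\, u^i u^j$, where $\rho := 1-|du|_g^2$ and $u^i := g^{ij}\partial_j u$. These identities reduce all subsequent manipulations to bookkeeping in $g$, $u$, and its derivatives.

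The key intermediate step is to express the Christoffel symbols $\tilde{\Gamma}^k_{ij}$ of $h$ in terms of those of $g$. Writing $h_{ij} = g_{ij} - u_i u_j$, differentiating, and using $u_{ij} = u_{ji}$ to produce cancellations, one computes
\[
\partial_i h_{jp} + \partial_j h_{ip} - \partial_p h_{ij} \;=\; 2\, g_{pm}\,\Gamma^m_{ij} \;-\; 2\, u_p\, (\nabla du)_{ij},
\]
with $(\nabla du)_{ij} = \partial^2_{ij}u - \Gamma^k_{ij}\partial_k u$. Contracting with $\tfrac12 h^{kp}$ and substituting the inverse formula yields the clean identity
\[
\tilde{\Gamma}^k_{ij} \;=\; \Gamma^k_{ij} \;-\; \rho^{-1}\, u^k\, (\nabla du)_{ij}.
\]

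From here I would substitute into the standard formula $R^h{}_{ijk}{}^{l} = \partial_i\tilde{\Gamma}^l_{jk} - \partial_j\tilde{\Gamma}^l_{ik} + \tilde{\Gamma}^l_{ip}\tilde{\Gamma}^p_{jk} - \tilde{\Gamma}^l_{jp}\tilde{\Gamma}^p_{ik}$, organizing the result as the $\Gamma$-part, plus the $g$-covariant derivative of $A^k_{ij}:=-\rho^{-1}u^k(\nabla du)_{ij}$, plus a quadratic term in $A$, and extract the Gaussian curvature via the 2D identity $K_h = R^h_{1212}/\det h$. The expected outcome after simplification is
\[
K_h \;=\; \frac{K_g}{\rho} \;-\; \frac{\det(\nabla du)}{\rho^{2}\,\det g},
\]
which is exactly the claimed formula upon recalling from \eqref{e:hessian} that $\det \nabla^2_g u = \det(\nabla du)/\det g$. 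The principal obstacle is the algebraic bookkeeping in this last step: the expansion produces many terms in $u^k$, $u_k$, $(\nabla du)_{ij}$ and derivatives thereof, and one must organize the computation carefully so that the linear-in-$A$ part rearranges into the covariant derivative of $A$ and the quadratic-in-$A$ piece collapses to the determinant of $\nabla du$ divided by $\rho$. As a sanity check, setting $K_h=0$ recovers the Darboux equation \eqref{eq-darb}, consistent with the classical heuristic that $g - du^2$ arises as the pull-back of a flat metric under projection of an isometric immersion onto a coordinate plane.

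A more conceptual alternative is to view $h$ as the induced metric on the spacelike graph of $u$ in the Lorentzian product $\mathcal{M}\times\mathbb{R}$ equipped with the metric $g - dt^2$, and invoke the Gauss equation for hypersurfaces. This route avoids most of the index gymnastics, but requires careful sign tracking associated with the timelike unit normal, so I would only fall back on it if the direct computation becomes unwieldy.
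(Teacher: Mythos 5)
First, note that the paper does not prove this lemma at all: it is quoted from Han--Hong and the text simply points to Lemma~2.1.2 of that book, so there is no in-paper argument to compare against. Your proposed route --- compute $\det h$ and $h^{-1}$ by the matrix determinant lemma and Sherman--Morrison, derive the Christoffel symbols of $h$, and feed them into the curvature tensor --- is the standard direct computation and would indeed establish the formula. Your preliminary identities are correct: $\det h=\rho\det g$, $h^{ij}=g^{ij}+\rho^{-1}u^iu^j$, and the final Christoffel identity $\tilde\Gamma^k_{ij}=\Gamma^k_{ij}-\rho^{-1}u^k(\nabla du)_{ij}$ is the right one (using $h^{kp}u_p=\rho^{-1}u^k$). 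One small inconsistency: the displayed intermediate step should read $\partial_i h_{jp}+\partial_j h_{ip}-\partial_p h_{ij}=2h_{pm}\Gamma^m_{ij}-2u_p(\nabla du)_{ij}$ (equivalently $2g_{pm}\Gamma^m_{ij}-2u_p\,\partial^2_{ij}u$); with $g_{pm}$ paired with $(\nabla du)_{ij}$ as you wrote it, contracting with $\tfrac12 h^{kp}$ does not reproduce your own final formula.

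The substantive issue is that the decisive step is not carried out. You write down the curvature expansion $R^h=R^g+\nabla A-\nabla A+[A,A]$ and then assert the conclusion as the ``expected outcome after simplification,'' explicitly flagging the bookkeeping as the principal obstacle. That bookkeeping is precisely the content of the lemma: one must check that the antisymmetrized $\nabla A$ terms (where third derivatives of $u$ cancel via the Ricci identity $\nabla_i\nabla_j\nabla_k u-\nabla_j\nabla_i\nabla_k u=-R^m{}_{kij}\nabla_m u$, leaving curvature and $\nabla\rho$ contributions) combine with $R^g$ to produce $K_g/\rho$ rather than $K_g$, and that the quadratic term, via the two-dimensional identity $\phi_{1p}\phi_{2k}-\phi_{2p}\phi_{1k}=\epsilon_{pk}\det\phi$ for symmetric $\phi$, yields exactly $-\det(\nabla du)/(\rho^2\det g)$ after lowering an index with $h$ and dividing by $\det h$. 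As it stands the proof is a correct and well-organized plan with the verification missing. Your Lorentzian-graph alternative (induced metric on the spacelike graph of $u$ in $(\mathcal{M}\times\R,\,g-dt^2)$ plus the Gauss equation with a timelike normal) is a legitimate and arguably cleaner route to the same identity, but it too is only sketched.
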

The proof can be found in \cite[Lemma 2.1.2]{Han-Hong-book}.

\subsection{H\"older spaces} For an open set $\Omega\subset \R^2$, $l\in \N_0$ and $N\in \N$ we denote by $C^l(\bar \Omega;\R^N)$ the set of maps $f:\Omega\to \R^N$ whose derivatives $D^\gamma f$ are uniformly continuous on bounded subsets of $\Omega$ for all $|\gamma|\leq l$. The respective supremum norms are defined by
\begin{equation*}
\|f\|_0 = \sup_{x\in\Omega}|f(x)|\,, [f]_j=\max_{|\gamma|=j}\|D^\gamma f(x)\|_0, ~~\|f\|_l=\sum_{j=0}^l[f]_j,
\end{equation*}
For any $ 0<\theta\leq1$, the H\"older space $C^{l,\theta}(\bar \Omega,\R^N)$ is the Banach space of maps $f\in C^l(\bar \Omega;\R^N)$ with finite the H\"older norm $\|f\|_{C^{l, \theta}}$. Here, 
\[
\|f\|_{C^{l, \theta}}=\|f\|_{l}+[f]_{l,\theta}\,, \text{ where }
[f]_{l,\theta}=\sup_{x\neq y, x, y\in\Omega}\frac{|D^lf(x)-D^lf(y)|}{|x-y|^\theta}\,.
\]
We denote by $C^{l,\theta}_{loc}(\Omega,\R^N)$ the space of maps $f:\Omega\to \R^N$ such that $f\in C^{l,\theta}(\bar V,\R^N)$ for all open $V\subset \subset \Omega$. 
Moreover, $C_c^{l, \theta}(\Omega)$ is the space of functions in $C^{l,\theta}_{loc}(\Omega)$ with compact support in $\Omega$ and $\big(C_c^{l, \theta}(\Omega)\big)^*$ denotes its dual space.

We say that a sequence $\{f_\epsilon\}$ converges in $C^{l,\theta}_{loc}$ to $f\in C^{l,\theta}_{loc}(\Omega,\R^N)$, if $f_\epsilon \to f$ in  $C^{l,\theta}(\bar V,\R^N)$ for all open $V\subset \subset \Omega$.
\subsection{Mollification} We will use the regularization of maps by mollification with a standard, radially symmetric smoothing kernel $\Phi_\epsilon$. For $f\in C^{l,\theta}_{loc}(\Omega,\R^N)$ we will abbreviate $f_\epsilon = f\ast \Phi_\epsilon$. The maps $f_\epsilon $ are only defined on the set $\Omega_\epsilon = \{x\in \Omega:\mathrm{dist}(x,\partial\Omega)>\epsilon\}$. However, for any open $V\subset\subset \Omega$ it holds $V\subset \Omega_\epsilon$ for small enough $\epsilon$, and $f_\epsilon \to f $ in $C^{l,\theta'}(\bar V,\R^N)$ for any $\theta'<\theta$ (see for example Lemma 1 in \cite{CDS2012} for a proof of this fact). Therefore, $f_\epsilon \to f $ in $C^{l,\theta'}_{loc}$. 

\section{Weak solutions to the Darboux equation}\label{se-proposition}
In this section we show that the notion of a weak solution according to Definition \ref{de-weak-darboux} is well-posed. More precisely, we have 

\begin{proposition}\label{pr:weak-darboux}
    Let $\Omega \subset \R^2 $ be a bounded domain and let $g\in C^2(\bar \Omega;\R^{2\times 2}_\mathrm{sym})$ be a metric on $\Omega$. If $\theta>1/2$ and $u\in C^{1,\theta}(\bar \Omega)$ is a map with $|du|_g<1$ on $\bar \Omega$, then 
    \[\frac{\det(\nabla^2_g u )}{(1-|du|_g^2)^{3/2}} dV_g \]
    is well-defined as an element in $\left( C^{1,\theta}_c(\Omega)\right)^*$ with the following estimate:
\begin{equation}\label{eq.estimate on weak-darboux}
    \norm{\frac{\det(\nabla_g^2 u)}{(1-|du|^2_g)^{3/2}}}_{(C_c^{1,\theta}(\Omega))^*} \leq C (1+ \norm{\nabla u}_{C^{0,\theta}})\norm{\nabla u}_{C^{0,\theta}}\,.
\end{equation}    
Moreover, if $\{u_\epsilon\}\subset C^2_{loc}(\Omega)$ are such that $u_\epsilon\to u $ in $C^{1,\theta}_{loc}(\Omega)$, then 
    \begin{equation}\label{e:hessianconvergence}
        \lim_{\epsilon\to  0 }\int_{\Omega } \varphi \frac{\det(\nabla^2_g u_\epsilon)}{(1-|du_\epsilon|_g^2)^{3/2}} dV_g = \frac{\det(\nabla^2_g u )}{(1-|du|_g^2)^{3/2}} dV_g[\varphi]
    \end{equation}
    for any $\varphi\in C^{1,\theta}_c(\Omega)$. 
\end{proposition}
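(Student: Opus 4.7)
The plan is to rewrite the left-hand side of \eqref{e:hessianconvergence}, for smooth $u$, as a finite sum of two kinds of quantities acting on $\varphi\in C^{1,\theta}_c(\Omega)$: pointwise integrals of products of $C^{0,\theta}$ functions of $(x,\nabla u,\nabla\varphi)$, and distributional Jacobians of the form $\partial_1 f\,\partial_2 g-\partial_2 f\,\partial_1 g$ with $f,g\in C^{0,\theta}$ polynomial in $\nabla u$. The proposition will then reduce to a dedicated appendix lemma asserting that such Jacobians are well-defined elements of $(C^{1,\theta}_c(\Omega))^*$ with operator norm $\lesssim \|f\|_{C^{0,\theta}}\|g\|_{C^{0,\theta}}$, and that they depend continuously on $f,g$ with respect to $C^{0,\theta}_{\mathrm{loc}}$-convergence. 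The threshold $\theta>1/2$ is exactly what makes this possible, as it lies in the Bony/Iwaniec regime for distributional products of H\"older quantities.

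To derive the representation for smooth $u$, I would start from \eqref{e:hessian} and expand $\det(\nabla du)=\det(D^2u)+L+Q$, where $L=a_{ij}(x,\nabla u)\,\partial_{ij}u$ is linear in $D^2u$ with H\"older coefficients coming from the Christoffel symbols, and $Q$ is polynomial in $\nabla u$. The leading term $\det(D^2u)=\mathrm{Jac}(\partial_1 u,\partial_2 u)$ is already the model Jacobian of H\"older functions. For $L$, a single integration by parts against $\varphi$ times the smooth prefactor $[\sqrt{\det g}\,(1-|du|_g^2)^{3/2}]^{-1}$ transfers one derivative off $D^2u$; the chain rule then splits the remaining $\partial_i a_{ij}$ into an $x$-part (H\"older) and a $\partial_p a_{ij}\,\partial_{ik}u$-part, and the null-Lagrangian (antisymmetric) structure of the $\Gamma^k_{ij}\partial_k u$-combinations allows the latter to be reorganized into additional Jacobian terms modulo continuous pieces. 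Combining these produces a representation
\[\int_\Omega\varphi\,\frac{\det(\nabla^2_g u)}{(1-|du|_g^2)^{3/2}}\,dV_g=\sum_\alpha\bigl\langle\mathrm{Jac}(f_\alpha,g_\alpha),\Psi_\alpha\bigr\rangle+\sum_\beta\int_\Omega R_\beta\,\Psi_\beta\,dx,\]
in which all $f_\alpha,g_\alpha,\Psi_\alpha,\Psi_\beta$ are $C^{0,\theta}$ and $R_\beta\in C^0$, built polynomially from $\nabla u$, $\varphi$, $\nabla\varphi$ and the smooth metric data.

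Given this representation, the estimate \eqref{eq.estimate on weak-darboux} follows by applying the appendix lemma to each Jacobian term (contributing a bound of order $\|\nabla u\|_{C^{0,\theta}}^2\|\varphi\|_{C^{1,\theta}}$) and bounding the pointwise integrals directly; the extra $(1+\|\nabla u\|_{C^{0,\theta}})$ factor tracks the additional power of $\nabla u$ introduced by the $\Gamma^k_{ij}\partial_k u$ corrections in $L$ and $Q$. The convergence \eqref{e:hessianconvergence} is then immediate termwise: all H\"older factors depending on $\nabla u_\epsilon$ converge in $C^{0,\theta'}_{\mathrm{loc}}$ for any $\theta'<\theta$, the continuous integrals pass to the limit, and the Jacobian terms converge by the stability statement of the appendix lemma, a standard $\theta'\uparrow\theta$ argument allowing us to retain the sharp $(C^{1,\theta}_c)^*$ setting via the uniform estimate. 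The main obstacle will be the algebraic bookkeeping that produces the above representation—specifically, verifying that the Christoffel-induced term $L$, which naively pairs a H\"older function with the distribution $\partial_{ij}u$, can indeed be rearranged into Jacobian-plus-integrable pieces after a single integration by parts. This is where the null-Lagrangian structure of the 2D Monge--Amp\`ere determinant, together with the careful choice of the weight $(1-|du|_g^2)^{-3/2}$, must be exploited.
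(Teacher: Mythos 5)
There is a genuine gap at the heart of your plan: the treatment of the leading term. After expanding via \eqref{e:hessian}, the top-order contribution is not the bare Jacobian $\det(D^2u)=\mathrm{Jac}(\partial_1u,\partial_2u)$ but the \emph{weighted} one, $b(x,\nabla u)\,\det(D^2u)$ with $b(x,y)=\det(g)^{-1}(1-g^{ij}y_iy_j)^{-3/2}$, and your representation pairs $\mathrm{Jac}(f_\alpha,g_\alpha)$ against test objects $\Psi_\alpha$ that are only $C^{0,\theta}$ (they contain $b(x,\nabla u)$). That pairing is not defined in the stated range: $\langle\mathrm{Jac}(f,g),\Psi\rangle=\int \Psi\,\partial_1f\,\partial_2g-\Psi\,\partial_2f\,\partial_1g$ is a trilinear form carrying \emph{two} derivatives on H\"older factors, so its total regularity budget is $3\theta-2$, which is negative for $1/2<\theta\le 2/3$. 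The distributional Jacobian of $C^{0,\theta}$ functions lives in $(C^{1,\theta}_c)^*$, not $(C^{0,\theta}_c)^*$, so absorbing the $\nabla u$-dependent weight into the test function loses exactly the derivative you need. The whole point of the threshold $\theta>1/2$ is that only \emph{bilinear} products $w\,\partial_i f$ with $w,f\in C^{0,\theta}$ (one derivative total) are available; your scheme as written would only close for $\theta>2/3$.

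The paper's proof resolves precisely this point with a device absent from your proposal (Lemma \ref{lem.extension of forms}): choose a vector field $B(x,y)$ with $\div_y B=b$, so that on $\Omega\times\R^2$ one has $b\,dy^1\wedge dy^2=d\beta-\partial_{x_j}B^i\,(dx^j\wedge \star_y dy^i)$ with $\beta=B^i\star_y dy^i$. Pulling back along $x\mapsto(x,\nabla u(x))$ and pairing with $\varphi$, the exact piece integrates by parts onto $d\varphi$ and every surviving term has the form $W(x,\nabla u)\,d\partial_ju$ with $W\in C^{0,\theta}$, i.e.\ exactly the bilinear product covered by Proposition \ref{c:distrproduct} for $\theta>1/2$. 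In other words, the antiderivative is taken in the $y$-variable \emph{before} pulling back, rather than integrating by parts in $x$ afterwards; this is what your ``null-Lagrangian reorganization'' would have to produce, and it is not automatic. A second, more minor, point: the term linear in $D^2u$ needs no integration by parts at all --- $A_{ij}(x,\nabla u)\,\partial_i(\partial_j u)$ is already a bilinear product of $C^{0,\theta}$ data and is handled directly by Proposition \ref{c:distrproduct}; integrating by parts there, as you propose, only regenerates second-derivative terms through the chain rule. Your outline of the norm estimate and of the $\epsilon\to0$ stability is fine once a valid representation is in place, but the representation itself is the missing step.
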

Here and hereafter, $C$ denotes a geometric constant which may depend on the metric $g$ and the domain $\Omega$.
To prove the above proposition we need the following result about distributional products of H\"older continuous functions. Several versions of this result are known in the literature (see for example Proposition 3.6 in \cite{DePakzad-2024}). For completeness we provide a short proof in the appendix.

\begin{proposition}\label{c:distrproduct} 
    Let $\Omega \subset \R^n$ be an open  set and $\theta>1/2$. Then the multilinear operator 
\[ C^1(\bar\Omega)\times C^1(\bar\Omega)\times C^1_c(\Omega) \ni (f,w,\varphi)\mapsto \int_{\Omega} \partial_i f w \varphi\,dx \]
has a continuous extension to $C^{0,\theta}(\bar \Omega)\times C^{0,\theta}(\bar \Omega)\times C_{c}^{0,\theta}(\Omega)$, which we denote by $\int \partial_i f w \varphi$. Consequently, for $f,w\in C^{0,\theta}(\bar\Omega)$ the distribution $\partial_i f w \in \left (C^{0,\theta}_c(\Omega)\right)^*$ can be defined by setting
\[ \partial_i f w[\varphi ] = \int\partial_i f w\varphi\,.\]
It satisfies the estimate  \[\|\partial_if w \|_{(C^{0,\theta}_c)^*}\leq C [f]_{0,\theta}[w]_{0,\theta}\,.\]
Moreover, if $\{f_n\},\{w_n\}$ converge in $C^{0,\theta}_{loc}(\Omega)$ to $f $ and $w$ respectively, then $$\partial_i f_n w_n[\varphi] \to \partial_i f w [\varphi]$$ for any $\varphi\in C^{0,\theta}_c(\Omega)$.
\end{proposition}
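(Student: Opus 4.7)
The plan is to define the extension by mollification and prove Cauchy convergence through a dyadic telescoping estimate that exploits $2\theta - 1 > 0$. For $f, w \in C^{0,\theta}(\bar\Omega)$ and $\varphi \in C^{0,\theta}_c(\Omega)$, I would set
\[
T_\epsilon(\varphi) := \int_\Omega \partial_i f_\epsilon \cdot w_\epsilon \cdot \varphi \, dx,
\]
well-defined once $\epsilon < \mathrm{dist}(\mathrm{supp}\,\varphi, \partial\Omega)$. Using $\int \partial_i \Phi_\epsilon = 0$ and $\int \Phi_\epsilon = 1$, one rewrites
\[
\partial_i f_\epsilon(x) \, w_\epsilon(x) = \iint (f(y)-f(x))(w(z)-w(x))\, \partial_{x_i}\Phi_\epsilon(x-y) \, \Phi_\epsilon(x-z) \, dy \, dz \;+\; \partial_i f_\epsilon(x) \, w(x),
\]
so that $T_\epsilon(\varphi) = R_\epsilon(\varphi) + U_\epsilon(\varphi)$, where $|R_\epsilon(\varphi)| \leq C \epsilon^{2\theta-1} [f]_{0,\theta}[w]_{0,\theta} \|\varphi\|_\infty |\mathrm{supp}\,\varphi|$ by direct kernel estimates, and $U_\epsilon(\varphi) := \int \partial_i f_\epsilon \cdot w \cdot \varphi \, dx$ is the singular piece requiring finer analysis.

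The key step is to establish the dyadic identity $U_\epsilon(\varphi) = U_{\epsilon/2}(\varphi) + E_\epsilon(\varphi)$ with $|E_\epsilon(\varphi)| \leq C\epsilon^{2\theta-1}[f]_{0,\theta}[w\varphi]_{0,\theta}|\mathrm{supp}\,\varphi|$. To derive it, change variables $y = x+v$ in $\partial_i f_\epsilon$, use oddness of $\partial_i\Phi_\epsilon$, and perform half-shifts $x = y \pm v/2$ in each of the two subintegrals to produce the symmetric integrand $g(y,v) := f(y+v/2)(w\varphi)(y-v/2) - f(y-v/2)(w\varphi)(y+v/2)$. Setting $h := w\varphi$ and decomposing $g(y,v) = \mathrm{Quad}(y,v) + f(y)[h(y-v/2) - h(y+v/2)] + h(y)[f(y+v/2) - f(y-v/2)]$, where $|\mathrm{Quad}(y,v)| \leq C[f]_{0,\theta}[h]_{0,\theta}|v|^{2\theta}$ by the double H\"older gain, one computes that the two ``linear'' pieces each contribute $\tfrac{1}{2} U_{\epsilon/2}(\varphi)$ (using the scaling identity $\int \partial_i\Phi_\epsilon(v)[f(y+v/2) - f(y-v/2)]\, dv = -\partial_i f_{\epsilon/2}(y)$ and the Fubini computation $\int f\,\partial_i h_{\epsilon/2}\,dy = -U_{\epsilon/2}(\varphi)$), summing to $U_{\epsilon/2}(\varphi)$, while the quadratic piece forms the error $E_\epsilon(\varphi)$.

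Telescoping $U_\epsilon = U_{\epsilon/2^k} + \sum_{j=0}^{k-1} E_{\epsilon/2^j}$ and summing the convergent geometric series $\sum_j 2^{-j(2\theta-1)}$ shows $\{U_{\epsilon/2^k}(\varphi)\}_{k}$ is Cauchy, hence $T_\epsilon(\varphi)$ converges as $\epsilon \to 0^+$, and the limit defines the extension. The dual-norm estimate then follows by combining the telescoping error with a direct bound on $|U_{\epsilon_0}(\varphi)|$ at a fixed scale $\epsilon_0 \sim \mathrm{diam}(\Omega)$; independence of the mollifier as well as the $C^{0,\theta}_{loc}$-convergence statement follow by applying the trilinear estimate to the differences $(f_n - f, w_n, \varphi)$ and $(f, w_n - w, \varphi)$, together with the density of smooth functions in $C^{0,\theta}$ for the $C^{0,\theta'}_{loc}$-topology ($\theta'<\theta$). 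The main obstacle is the dyadic identity itself: naive symmetrizations $x \leftrightarrow y$ in the kernel produce only the tautology $U_\epsilon = U_\epsilon$, and the half-shift to scale $\epsilon/2$ is the decisive trick; the threshold $\theta > 1/2$ enters precisely to make the geometric series $\sum_j 2^{-j(2\theta-1)}$ summable.
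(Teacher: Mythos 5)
Your argument is correct in its essentials but takes a genuinely different route from the paper's. The paper deduces the statement from an abstract trilinear estimate in Besov spaces (Lemma \ref{lem.product}): it extends $f,w,\varphi$ to $\R^{n+1}_+$ by mollification at scale $t$, rewrites $\int \partial_j f\, w\,\varphi\,dx$ as a bulk integral $\int_0^\infty\int d(F\star dx^j)\wedge d(GH)$ via Stokes, estimates it through the trace characterization of $B^s_{p,q}$, and then invokes the embedding $C^{0,\theta}\hookrightarrow B^s_{\infty,q}$ together with Whitney extension; the threshold $\theta>1/2$ enters as the requirement that both $s$ and $1-s$ lie below $\theta$. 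You instead run an elementary real-variable scheme: a single mollification scale, the double-H\"older commutator $R_\epsilon=O(\epsilon^{2\theta-1})$, and the two-scale identity $U_\epsilon=U_{\epsilon/2}+O(\epsilon^{2\theta-1})$ via the half-shift symmetrization. I verified the scaling identity $\int\partial_i\Phi_\epsilon(v)\bigl[f(y+v/2)-f(y-v/2)\bigr]dv=-\partial_i f_{\epsilon/2}(y)$ and the bookkeeping giving $\tfrac12 U_{\epsilon/2}$ from each linear piece; both are correct, and your telescoping is in effect a hands-on Littlewood--Paley decomposition, so the two proofs encode the same cancellation while yours avoids the Besov and extension machinery. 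One point to tighten: the telescoping only makes $U_\epsilon$ Cauchy along dyadic sequences $\epsilon_0 2^{-k}$, which by itself gives neither convergence of the full net $\epsilon\to 0^+$ nor independence of the mollifier; the density argument you mention at the end (uniform-in-$\epsilon$ trilinear bound, convergence for smooth data, density of smooth functions in $C^{0,\theta'}$ for $1/2<\theta'<\theta$) does repair this, but it should be presented as the mechanism establishing convergence itself rather than only mollifier-independence --- alternatively, the same half-shift computation with an arbitrary ratio $\lambda\in(0,1)$ yields $U_\epsilon=\lambda U_{\lambda\epsilon}+(1-\lambda)U_{(1-\lambda)\epsilon}+O(\epsilon^{2\theta-1})$, from which full convergence follows directly.
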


To prove Proposition \ref{pr:weak-darboux}, we need the following lemma for defining weak hessian determinant via differential forms.

\begin{lemma}\label{lem.extension of forms} Let $\Omega \subset \R^2 $ be a bounded domain and $\theta>\frac12$.
    If $b(x,y)dy^1\wedge dy^2$ is a $2$-form on $\Omega\times \R^2$ such that
\begin{equation}\label{eq.assumptions on form}
    \norm{b(\cdot,y)}_{C^{0,\theta}(\bar\Omega)} + \norm{D_xD_yb(\cdot,y)}_{C^{0,\theta}(\bar\Omega)} \le M
\end{equation}
for some constant $M\geq 1$, then \[b(x, \nabla u) d\partial_1u \wedge d\partial_2 u = (x,\nabla u)^\sharp \left( b(x,y)dy^1\wedge dy^2\right) \in \left(C^{1,\theta}_c(\Omega)\right)^*\]
extends continuously from $u\in C^2(\overline{\Omega})$ to $u\in C^{1,\theta}(\overline{\Omega})$ with estimate 
\[\norm{b(x, \nabla u) d\partial_1u \wedge d\partial_2 u}_{(C_c^{1,\theta}(\Omega))^*} \leq C M (1+ \norm{\nabla u}_{C^{0,\theta}})\,.\]
\end{lemma}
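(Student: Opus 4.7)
The plan is to derive, for smooth $u$, an explicit formula for the pairing $\int \varphi \cdot b(x, \nabla u)\, d\partial_1 u \wedge d\partial_2 u$ in which every integrand contains at most one second derivative of $u$, and then to extend the formula term-by-term to $u \in C^{1, \theta}(\bar\Omega)$ via Proposition \ref{c:distrproduct}.

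First, I would introduce the partial $y_1$-primitive $B(x, y) := \int_0^{y_1} b(x, s, y_2)\, ds$, for which $\partial_{y_1} B = b$. A direct computation on $\Omega \times \R^2$ gives the identity
\[
b(x, y)\, dy^1 \wedge dy^2 = d(B\, dy^2) - \partial_{x_i} B\, dx^i \wedge dy^2.
\]
Pulling back by $(\id, \nabla u): \Omega \to \Omega \times \R^2$ and using naturality of $d$, for smooth $u$ one obtains
\[
b(x, \nabla u)\, d\partial_1 u \wedge d\partial_2 u = d\bigl(B(x, \nabla u)\, d\partial_2 u\bigr) - \partial_{x_i} B(x, \nabla u)\, dx^i \wedge d\partial_2 u.
\]
Pair with $\varphi \in C^{1, \theta}_c(\Omega)$. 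Stokes' theorem shifts the outer $d$ in the first term onto $d\varphi$, and a second integration by parts (moving the derivative off $\partial_2 u$ onto $\varphi$ and the coefficient, using $\partial_{jk} u = \partial_j(\partial_k u)$ and the chain rule) produces a representation
\[
\int \varphi\, b(x, \nabla u)\, d\partial_1 u \wedge d\partial_2 u \;=\; I_0(\varphi, u) \;+\; \sum_{j} \int \psi_j\, w_j\, \partial_l(\partial_k u)\, dx,
\]
where $I_0(\varphi, u)$ is an ordinary integral involving no second derivatives of $u$, and in each remaining summand $\psi_j \in C^{0,\theta}_c(\Omega)$ is a component of $\varphi$ or $\nabla\varphi$, while $w_j \in C^{0, \theta}(\bar\Omega)$ is built as a product of $\partial_l u$ with coefficients $b$, $\partial_{y_k} B$, $\partial_{x_i} B$ evaluated at $(x, \nabla u)$.

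The extension and the estimate then follow from Proposition \ref{c:distrproduct}: each summand $\int \psi_j\, w_j\, \partial_l(\partial_k u)\,dx$ is the value at $(f, w, \psi) = (\partial_k u, w_j, \psi_j)$ of the multilinear form there, which extends continuously to $C^{0, \theta} \times C^{0, \theta} \times C^{0, \theta}_c$ when $\theta > 1/2$. For $u \in C^{1, \theta}(\bar\Omega)$ all factors lie in the required Hölder spaces, so the right-hand side defines the sought element of $(C^{1, \theta}_c(\Omega))^*$. Applying the bound $[f]_{0, \theta}[w]_{0, \theta}[\psi]_{0, \theta}$ of Proposition \ref{c:distrproduct} summand-by-summand, using the composition estimate $[F(\cdot, \nabla u)]_{0, \theta} \le \sup_y [F(\cdot, y)]_{C^{0, \theta}} + \|D_y F\|_\infty [\nabla u]_{0, \theta}$ together with the hypothesis \eqref{eq.assumptions on form} to control the $C^{0, \theta}$ norms of $B, \partial_{x_i} B, \partial_{y_k} B$, and bounding $[\psi_j]_{0, \theta} \le C\|\varphi\|_{C^{1, \theta}}$, yields the estimate $CM(1 + \|\nabla u\|_{C^{0, \theta}})\|\varphi\|_{C^{1, \theta}}$.

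The principal obstacle is designing the integration-by-parts scheme in the first step so that no integrand contains a product $\partial_{ij} u \cdot \partial_{kl} u$ of two second derivatives: such a product has no distributional meaning at Hölder regularity $C^{1, \theta}$, so the quadratic-in-$\nabla^2 u$ structure of $\det(\nabla^2 u) = d\partial_1 u \wedge d\partial_2 u$ must be fully linearized in each resulting integrand. The use of the $y_1$-primitive $B$ is the algebraic device accomplishing this: it expresses $b\, dy^1 \wedge dy^2$ as $d$ of a $1$-form modulo a $dx^i \wedge dy^2$ part, which after pullback decomposes the original $2$-form into pieces each linear in $\nabla^2 u$. Once this decomposition is in place, the continuous dependence on $u \in C^{1, \theta}$ follows immediately from the continuity assertion in Proposition \ref{c:distrproduct}.
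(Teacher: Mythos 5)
Your proposal is correct and follows essentially the same route as the paper: the paper also writes $b(x,y)\,dy^1\wedge dy^2 = d\beta - b_{ij}(x,y)\,dx^i\wedge dy^j$ for a $1$-form $\beta = B^i\star_y dy^i$ built from a vector field with $\div_y B = b$ (your $y_1$-antiderivative is just the particular choice $B=(\int_0^{y_1}b,\,0)$), pulls back by $(x,\nabla u)$, and then invokes Proposition \ref{c:distrproduct} on the resulting terms, each of which contains exactly one second derivative of $u$ paired against two H\"older factors. The only cosmetic difference is that the paper phrases the conclusion directly as membership of $B^i(x,\nabla u)\,d\partial_j u$ and $b_{ij}(x,\nabla u)\,d\partial_j u$ in $(C^{0,\theta}_c(\Omega))^*$ rather than spelling out the integration by parts against $\varphi$.
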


\begin{proof}
    Fix a vectorfield $B$ on $\Omega \times \R^2$ such that $$\div_y(B(x,y))=b(x,y).$$ In particular it can be chosen such that it satisfies similar estimates to 
    \eqref{eq.assumptions on form}, i.e.,
       \begin{equation}
        \label{eq-B-estimate}
        \sum_{i,j\in\{0, 1\}}\norm{D_x^iD_y^jB(\cdot,y)}_{C^{0,\theta}(\bar\Omega)} \le CM.
    \end{equation}
    
    Consider the 1-form $$\beta = B\restrl (dy^1\wedge dy^2)= B^i \star_y dy^i $$ and note that 
    \begin{align*}
        d\beta &= \div_y(B)\,( dy^1 \wedge dy^2) + \partial_{x_j}B^i (dx^j \wedge \star_y dy^i)\\
        &=:b(x,y) (dy^1\wedge dy^2) + b_{ij}(x,y) (dx^i\wedge dy^j)
    \end{align*}
    where we defined $b_{ij}\in C^{0,\theta}$ by the previous equality where we sum over repeated indices. Hence, 
    \[b(x,y)(dy^1\wedge dy^2)= d\beta - b_{ij}(x,y) (dx^i\wedge dy^j)\,.\]
    Now we observe that given $\nabla u \in C^{0,\theta}(\bar \Omega)$ by \eqref{eq-B-estimate} we have 
    \begin{align*}
        \norm{B^i(x,\nabla u)}_{C^{0,\theta}} &\leq C \left( \norm{B^i}_{C^{0,\theta}}+\norm{D_y B^i}_{C^{0,\theta}}\right)(1+ \norm{\nabla u}_{C^{0,\theta}})\\
        &\leq C M (1+ \norm{\nabla u}_{C^{0,\theta}})\,.
    \end{align*}
    Similarly we have 
    \begin{align*}
        \norm{b_{ij}(x,\nabla u)}_{C^{0,\theta}} &\leq \left( \norm{\partial_{x_j}B}_{C^{0,\theta}}+\norm{D_y\partial_{x_j} B}_{C^{0,\theta}}\right)(1+ \norm{\nabla u}_{C^{0,\theta}})\\
        &\leq C M (1+ \norm{\nabla u}_{C^{0,\theta}})\,.
    \end{align*}
    Hence in both cases we can appeal to Proposition \ref{c:distrproduct} to observe that 
    \begin{align*}
        \norm{B^i(x,\nabla u) d\partial_j u}_{(C^{0,\theta}_c(\Omega))^*}&\leq C M (1+ \norm{\nabla u}_{C^{0,\theta}})\norm{\nabla u}_{C^{0,\theta}}\,,\\
        \norm{b_{ij}(x,\nabla u) d\partial_j u}_{(C^{0,\theta}_c(\Omega))^*}&\leq C M (1+ \norm{\nabla u}_{C^{0,\theta}})\norm{\nabla u}_{C^{0,\theta}}\,.
    \end{align*}
    The continuous dependence on $u$ follows from the continuous dependence observed in Proposition \ref{c:distrproduct} and the observation that the maps
    \[ \nabla u \mapsto b_{ij}(\cdot,\nabla u), \quad \nabla u \mapsto B^i(\cdot,\nabla u)\]
    are continuous maps from $C^{0,\theta}(\bar\Omega)$ into $C^{0,\theta}(\bar \Omega)$.
\end{proof}
\begin{proof}[Proof of Proposition \ref{pr:weak-darboux}]
    As recalled in Section \ref{s:hessian} \[ \det(\nabla^2_g u)  = \frac{\det\left(\partial_{ij}^2u - \Gamma^{k}_{ij} \partial_k u\right)}{\det(g)}\] 
    and thus
    \begin{align*}
        \frac{\det(\nabla^2_g u)}{(1-|du|_g^2)^{3/2}}
        =\frac{\det(D^2u)}{\det(g) (1-|du|_g^2)^{3/2}} - \frac{\operatorname{cof}(\Gamma^k\partial_ku)\colon D^2u}{\det(g)(1-|du|_g^2)^{3/2}} + \frac{\det(\Gamma^k\partial_k u)}{\det(g)(1-|du|_g^2)^{3/2}}\,.
    \end{align*}
    
   Note that by assumption there is $\epsilon>0$ such that $g^{ij} \partial_iu \partial_j u< (1-2\epsilon)^2$ on $\overline{\Omega}$. Now fix any $\tilde{b}(t)\in C^\infty(\R_+)$ such that $$\tilde{b}(t)=\frac{1}{(1-t)^{3/2}} \text{ on } |t|<1-\epsilon$$ and define $b, A, E:\Omega\times \R^2 \to \R$ by 
   \begin{align*}
         b(x,y)&= \det(g)^{-1}\tilde{b}(g^{ij}(x)y_iy_j)\,,\\
         A(x,y) &= b(x,y) \operatorname{cof}( \Gamma^k(x)y_k)\,,\\
         E(x,y) &= b(x,y) \det(\Gamma^k(x)y_k)\,.
   \end{align*}
    With these definitions it follows that for every $u \in C^2(\bar{\Omega})$ satisfying $|du|_g< 1-\epsilon$ on $\Omega$ it holds 
    \[\frac{\det(\nabla^2_g u)}{(1-|du|_g^2)^{3/2}} = b(x,\nabla u) d\partial_1u\wedge d\partial_2u + A(x,\nabla u)\colon D^2u + E(x,\nabla u)\,.\]
    
    The first term extends locally continuously to $u\in C^{1,\theta}(\bar \Omega)$ by Lemma \ref{lem.extension of forms}. For the second and third term we observe that $\nabla u \mapsto A(\cdot,\nabla u),\,\, \nabla u \mapsto E(\cdot, \nabla u)$ are continuously mapping $C^{0,\theta}(\bar \Omega)$ into $C^{0,\theta}(\bar \Omega)$. Hence we can apply Proposition \ref{c:distrproduct} to the second term, while the third is classically defined. Finally the conclusion follows.
\end{proof}

\section{Distributional Gaussian curvature and flatness}\label{s:flatness}
    In this section we define a distributional Gaussian curvature for H\"older continuous metrics $h\in C^{0,\theta}$ (see also \cite{Pakzad} for a distributional Gaussian curvature for $h\in C^1$). We then show that the classical characterization of flatness extends to this low-regularity regime: a two-dimensional Riemannian manifold $(\Omega,h)$ with smooth metric is flat (i.e., locally isometric to $\R^2$) if and only if its Gaussian curvature $K_h$ vanishes identically.
    
    We will exploit the two-dimensional setting by using isothermal (or conformal) coordinates. We review their definition and key properties in the next subsection. 
\subsection{Conformal charts}\label{s:conformal}
 \begin{definition}\label{de-conformal}
        Let $\Omega\subset \C $ be an open set and $h\in C^{0,\theta}_{loc}( \Omega, \R^{2\times 2}_\sym)$ a metric. We call $(\psi,  V)$ a \emph{conformal chart} for $h$ if $ V\subset \Omega$ is open, $\psi:V\to\psi(V) \subset \C $ is an orientation preserving diffeomorphism, and 
        \[ h = \psi^\sharp (e^{2\sigma}|dz|^2) \text{ on } V.\]
      Here, $\sigma$ is determined by $\psi $ and $h$ through  $\sigma =\frac12 \log \left(\frac{ \mathrm{tr} h}{|D\psi|^2}\circ \psi^{-1}\right)$. Nevertheless, for notational reasons we will denote the chart by $(\psi,\sigma,V)$.
    \end{definition}
    The map $\psi$ yields coordinates in which the metric $h$ is diagonal. They are also known as isothermal coordinates. 
\begin{remark}\label{r:conformal}

   \begin{enumerate}
   \item The existence of conformal charts for a given metric is classical (see \cite{Ahlfors} and references therein), and can be reduced to solving the Beltrami equation $\psi_{\bar z} = \mu \psi_z$ on $V$. Conversely,  every conformal chart arises as a solution to the Beltrami equation. The coefficient $\mu$, which satisfies $|\mu|<1$ on $\Omega$, is determined by the metric $h$. In particular, if $h\in C^{l,\theta}_{loc}$, then $\mu \in C^{l,\theta}_{loc}$ as well.
   \item If $h\in C^{l,\theta}_{loc}(\Omega, \R^{2\times 2}_\sym)$ is a metric with $l\in \N_0, \theta\in (0,1]$, then any chart $(\psi, \sigma, V)$ satisfies  $\psi \in C^{l+1,\theta}_{loc}(V), \sigma \in C^{l,\theta}_{loc}(\psi(V))$ (cf. Theorem 15.6.2 in \cite{AIM2009}).
   \end{enumerate}
\end{remark}
We will need the following facts. 
\begin{lemma}\label{l:charts}
    Assume $\Omega \subset \C$ is an open, bounded set and $h\in C^{0,\theta}(\bar \Omega,\R^{2\times 2}_\sym)$. Then there exists a ``global'' conformal chart  $(\psi,\sigma,\Omega)$, satisfying in addition $\psi\in C^{1,\theta}(\bar \Omega)$. Moreover, if $\{h_\epsilon\}\subset  C^{0,\theta}(\bar \Omega,\R^{2\times 2}_\sym)$ is a sequence of metrics such that $h_\epsilon\to h$ in $C^{0,\theta}$ there are ``global''  conformal charts $(\psi_\epsilon,\sigma_\epsilon, \Omega)$ for $h_\epsilon$ satisfying
    \begin{equation}\label{e:convofcharts}
    \|\psi_\epsilon-\psi\|_{C^{1,\theta}(\bar \Omega)} + \|\sigma_\epsilon-\sigma\|_{C^{0, \theta}(\bar \Omega)} \to 0\,.
\end{equation} 
\end{lemma}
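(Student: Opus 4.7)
I will reduce the lemma to the theory of the Beltrami equation. Writing $h$ in coordinates as $(h_{ij})$, a chart $(\psi,\sigma,V)$ is conformal for $h$ if and only if $\psi$ is an orientation-preserving diffeomorphism solving $\psi_{\bar z}=\mu\,\psi_z$ on $V$, where
\[
\mu=\frac{h_{11}-h_{22}+2ih_{12}}{h_{11}+h_{22}+2\sqrt{\det h}}
\]
is a smooth function of the entries of $h$ whenever $\det h>0$. The hypotheses of the lemma therefore yield $\mu\in C^{0,\theta}(\bar\Omega)$ with $\|\mu\|_{L^\infty}\le k<1$, and the map $h\mapsto\mu$ is continuous in the $C^{0,\theta}$ topology.

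For the existence of the global chart I first extend $\mu$ to $\tilde\mu\in C^{0,\theta}(\C)$ with compact support and $\|\tilde\mu\|_{L^\infty}<1$: this can be arranged by composing a bounded linear extension operator (Whitney--Stein) with a smooth cutoff equal to one on $\bar\Omega$ and, if necessary, with the radial Lipschitz retraction onto the closed disk of radius $k$, which preserves the sup bound. The resulting procedure depends linearly and continuously on $\mu$. The measurable Riemann mapping theorem then produces the unique principal solution $\psi\in W^{1,2}_{loc}(\C)$ of $\psi_{\bar z}=\tilde\mu\,\psi_z$ normalized by $\psi(z)-z\to 0$ at infinity, which is a global quasiconformal homeomorphism. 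Writing $\psi_z=1+B\psi_{\bar z}$ with $B$ the Beurling transform and inserting this into the Beltrami equation gives $(I-\tilde\mu B)\psi_{\bar z}=\tilde\mu$; the H\"older theory for this equation (Theorem~15.0.7 in \cite{AIM2009}) yields $\psi_{\bar z}\in C^{0,\theta}(\C)$ and hence $\psi-z\in C^{1,\theta}(\C)$. Restricting to $\bar\Omega$ produces the desired chart, and $\sigma$ lies in $C^{0,\theta}$ since $|D\psi|$ is bounded below by quasiconformality and composition with the bi-$C^{1,\theta}$ diffeomorphism $\psi^{-1}$ preserves H\"older regularity.

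For the stability statement, the same extension yields $\tilde\mu_\epsilon\to\tilde\mu$ in $C^{0,\theta}(\C)$ with uniform compact support and $\|\tilde\mu_\epsilon\|_{L^\infty}\le k'<1$ for small $\epsilon$. The corresponding principal solutions satisfy $(I-\tilde\mu_\epsilon B)(\psi_\epsilon)_{\bar z}=\tilde\mu_\epsilon$, and since the operators $I-\tilde\mu_\epsilon B$ are uniformly invertible on $C^{0,\theta}(\C)$, a standard perturbation argument gives $(\psi_\epsilon)_{\bar z}\to\psi_{\bar z}$ in $C^{0,\theta}$. Combined with $(\psi_\epsilon)_z=1+B(\psi_\epsilon)_{\bar z}$ and the principal normalization, this upgrades to $\psi_\epsilon\to\psi$ in $C^{1,\theta}(\C)$, hence on $\bar\Omega$. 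The convergence of $\sigma_\epsilon$ then follows by combining the $C^{0,\theta}$ convergences $\mathrm{tr}\,h_\epsilon\to\mathrm{tr}\,h$ and $|D\psi_\epsilon|^2\to|D\psi|^2$ with $\psi_\epsilon^{-1}\to\psi^{-1}$ in $C^{1,\theta}$, the latter obtained from the inverse function theorem and the uniform non-degeneracy of the Jacobians.

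The main obstacle will be organizing the $C^{1,\theta}$ regularity and stability for the principal Beltrami solution. Both reduce to the H\"older theory of the Beurling transform and to the uniform invertibility of $I-\tilde\mu B$ for $\|\tilde\mu\|_{L^\infty}\le k<1$, which are standard but delicate, with additional care needed near infinity (which is why one works globally on $\C$ with compactly supported coefficients and subtracts the identity). These ingredients are available in \cite{AIM2009}, already referenced in Remark~\ref{r:conformal}, so the remaining work lies in assembling them carefully rather than in developing new analysis.
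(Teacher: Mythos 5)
Your proposal is correct and follows essentially the same route as the paper: extend the Beltrami coefficient to a compactly supported $C^{0,\theta}$ function on $\C$ with sup norm bounded away from $1$, take the principal solution, obtain $C^{1,\theta}$ regularity from the H\"older theory of \cite{AIM2009}, and get stability by a perturbation argument for the Beltrami equation. The one caveat is your phrase ``uniformly invertible on $C^{0,\theta}(\C)$'': since the Beurling transform is not a contraction on H\"older spaces, this does not follow from $\|\tilde\mu_\epsilon\|_{L^\infty}\le k'<1$ alone but requires the quantitative Schauder estimates with constants depending on $[\tilde\mu_\epsilon]_{0,\theta}$ (uniformly bounded here) --- which is precisely how the paper organizes the same step, by writing the inhomogeneous Beltrami equation satisfied by $\psi_\epsilon-\psi$ and applying Theorem 15.0.6 of \cite{AIM2009} together with an $L^\infty$ bound, the $L^2$ control of $D(\psi_\epsilon-\psi)$, and interpolation.
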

For the reader's convenience we provide a proof of the preceding lemma in the appendix.

\subsection{Distributional Gaussian curvature} We now define a notion of Gaussian curvature for H\"older continuous metrics.  Recall that for a smooth metric $h$, the Gaussian curvature $K_h$ in conformal coordinates $(\psi,\sigma,V)$ is given by $$K_h\circ\psi^{-1} = -2e^{-2\sigma} \Delta \sigma.$$
Since moreover $\det h\circ \psi^{-1} = e^{4\sigma}$ it follows that 
    \[ \int_V \varphi K_h dV_h = \int_{\psi(V)}(\varphi\circ \psi^{-1})(K_h\circ\psi^{-1})(\sqrt{\det h }\circ \psi^{-1})dz =- 2\int_{\psi(V)} (\varphi\circ \psi^{-1}) \Delta \sigma dz\,.  \]
    This motivates the following 
\begin{definition} Let $\Omega\subset \R^2 $ be open, $\theta>1/2$, and let $h\in C^{0, \theta}_{loc}( {\Omega}; \R^{2\times2}_\sym)$ be a metric. We define the distributional Gaussian curvature by
    \begin{equation}\label{eq-distri-gauss}
K_hdV_h[\varphi]=2\int\nabla(\varphi\circ\psi^{-1})\cdot\nabla\sigma 
    \end{equation}
    for $\varphi\in C^\infty_c(\Omega)$, where $(\psi,\sigma,V)$ is any conformal chart such that $\mathrm{spt}(\varphi)\subset V$. 
\end{definition}

\begin{proposition}\label{pr-kdv-def}
    The distributional Gaussian curvature is well-defined by \eqref{eq-distri-gauss} and extends to $K_hdV_h\in \left(C^{1,\theta}_c(\Omega)\right)^*$. Moreover, if $h_\epsilon \to h $ in $C^{0,\theta}_{loc}(\Omega)$ then $K_{h_\epsilon}dV_{h_\epsilon} \to K_hdV_h$ as distributions. 
\end{proposition}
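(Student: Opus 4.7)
The strategy is to combine Lemma \ref{l:charts} (existence and stability of global conformal charts) with the distributional product from Proposition \ref{c:distrproduct} in order to (a) give meaning to the right-hand side of \eqref{eq-distri-gauss} for H\"older test functions, (b) derive the $(C^{1,\theta}_c(\Omega))^*$ estimate, (c) prove independence of the chart via mollification, and (d) pass to the limit $h_\epsilon \to h$.

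First, I fix $\varphi \in C^{1,\theta}_c(\Omega)$ and pick, via Lemma \ref{l:charts}, a global chart $(\psi, \sigma, V)$ on a bounded open $V$ with $\spt \varphi \subset V \subset \bar V \subset \Omega$; by Remark \ref{r:conformal} one has $\psi^{-1} \in C^{1,\theta}$ and $\sigma \in C^{0,\theta}$. Then $\partial_i(\varphi \circ \psi^{-1}) \in C^{0,\theta}_c(\psi(V))$. Choosing a cutoff $\eta \in C^\infty_c(\psi(V))$ with $\eta \equiv 1$ on $\psi(\spt\varphi)$ and applying Proposition \ref{c:distrproduct} with $f = \sigma$, $w = \partial_i(\varphi \circ \psi^{-1})$, and test function $\eta$, the expression
\[ K_h dV_h[\varphi] = 2 \sum_i \int \partial_i \sigma \cdot \partial_i(\varphi \circ \psi^{-1}) \cdot \eta \]
is well-defined and satisfies the bound
\[ |K_h dV_h[\varphi]| \leq C [\sigma]_{0,\theta}\, \|\nabla(\varphi \circ \psi^{-1})\|_{C^{0,\theta}}\, \|\eta\|_{C^{0,\theta}} \leq C_V \|\varphi\|_{C^{1,\theta}}, \]
yielding the claimed extension to $(C^{1,\theta}_c(\Omega))^*$.

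Independence from the chart I would obtain by approximation. Mollifying $h$ to smooth metrics $h_\delta = h \ast \Phi_\delta$ with $h_\delta \to h$ in $C^{0,\theta'}_{loc}$ for every $\theta' < \theta$, Lemma \ref{l:charts} produces, for any two given charts $(\psi^{(k)}, \sigma^{(k)}, V)$, $k=1,2$, of $h$, approximating smooth charts $(\psi^{(k)}_\delta, \sigma^{(k)}_\delta, V)$ of $h_\delta$ with $\psi^{(k)}_\delta \to \psi^{(k)}$ in $C^{1,\theta'}(\bar V)$ and $\sigma^{(k)}_\delta \to \sigma^{(k)}$ in $C^{0,\theta'}$. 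For smooth $h_\delta$, integration by parts combined with the classical identity $K_{h_\delta} \circ (\psi^{(k)}_\delta)^{-1} = -2 e^{-2\sigma^{(k)}_\delta}\Delta \sigma^{(k)}_\delta$ gives
\[ 2\int \nabla(\varphi \circ (\psi^{(k)}_\delta)^{-1}) \cdot \nabla \sigma^{(k)}_\delta = \int \varphi\, K_{h_\delta}\, dV_{h_\delta}, \]
a quantity manifestly independent of $k$. Passing $\delta \to 0$ via the continuity asserted in Proposition \ref{c:distrproduct} (applied with $f_\delta = \sigma^{(k)}_\delta$ and $w_\delta = \partial_i(\varphi \circ (\psi^{(k)}_\delta)^{-1})$, both converging in $C^{0,\theta'}$), the two chart expressions for $h$ must coincide.

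The distributional convergence $K_{h_\epsilon}dV_{h_\epsilon} \to K_hdV_h$ under $h_\epsilon \to h$ in $C^{0,\theta}_{loc}(\Omega)$ is then obtained by the same template: given $\varphi \in C^\infty_c(\Omega)$, pick a bounded $V \supset \spt\varphi$ and invoke Lemma \ref{l:charts} to obtain charts $(\psi_\epsilon, \sigma_\epsilon, V) \to (\psi, \sigma, V)$. The resulting convergence $\nabla(\varphi \circ \psi_\epsilon^{-1}) \to \nabla(\varphi \circ \psi^{-1})$ in $C^{0,\theta'}$ combined with the continuity part of Proposition \ref{c:distrproduct} yields $K_{h_\epsilon}dV_{h_\epsilon}[\varphi] \to K_hdV_h[\varphi]$. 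The main technical delicacy throughout is that the chart images $\psi(V)$, $\psi^{(k)}_\delta(V)$, $\psi_\epsilon(V)$ live on slightly different domains, so one must exploit the uniform H\"older control provided by Lemma \ref{l:charts} to compare everything on a common domain before invoking Proposition \ref{c:distrproduct}.
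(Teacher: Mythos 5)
The first and last parts of your argument (well-definedness of \eqref{eq-distri-gauss} in a fixed chart, the $(C^{1,\theta}_c(\Omega))^*$ bound, and the stability under $h_\epsilon\to h$) follow essentially the paper's route: Remark \ref{r:conformal}(2) gives $\psi^{-1}\in C^{1,\theta}$ and $\sigma\in C^{0,\theta}$, Proposition \ref{c:distrproduct} gives meaning to $\int\nabla(\varphi\circ\psi^{-1})\cdot\nabla\sigma$ together with the quantitative estimate, and Lemma \ref{l:charts} combined with the continuity of the distributional product yields $K_{h_\epsilon}dV_{h_\epsilon}[\varphi]\to K_hdV_h[\varphi]$. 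These parts are correct.

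The gap is in the chart-independence step. You invoke Lemma \ref{l:charts} as if it produced, for an \emph{arbitrary prescribed} conformal chart $(\psi^{(k)},\sigma^{(k)},V)$ of $h$, smooth charts $(\psi^{(k)}_\delta,\sigma^{(k)}_\delta,V)$ of the mollified metric $h_\delta$ with $\psi^{(k)}_\delta\to\psi^{(k)}$ in $C^{1,\theta'}$. The lemma does not assert this: it constructs \emph{one} particular chart of $h$ (the principal solution of the Beltrami equation) and approximating charts converging to \emph{that} one. To upgrade to arbitrary charts you would have to write $\psi^{(k)}=F^{(k)}\circ\psi$ with $F^{(k)}=\psi^{(k)}\circ\psi^{-1}$ and set $\psi^{(k)}_\delta=F^{(k)}\circ\psi_\delta$; but for these to be conformal charts of $h_\delta$ you need to know that the transition map $F^{(k)}$ between two $C^{1,\theta}$ conformal charts is conformal, orientation preserving and hence holomorphic. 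That holomorphy is precisely the nontrivial content behind chart independence, and it is what the paper proves directly: it shows $f=\psi_1\circ\psi_2^{-1}$ is holomorphic, derives $\sigma_2=\sigma_1\circ f+\ln|f'|$, exploits the harmonicity of $\ln|f'|$ and the conformal invariance of the Dirichlet integral, and mollifies $\sigma_2$ (not the metric) to justify the computation. So your reduction to the smooth case does not avoid the key difficulty; it presupposes it. To repair the proof, either establish the strengthened approximation statement (which requires proving the holomorphy of transition maps first, after which your limit argument goes through, since charts of the smooth $h_\delta$ are themselves smooth by Remark \ref{r:conformal}(2) and the classical identity applies), or argue directly on the transition map as the paper does.
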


\begin{proof} Let $\varphi\in C^\infty_c(\Omega)$ and $(\psi,\sigma, V)$ a conformal chart such that $\mathrm{spt}(\varphi)\subset V$. It follows from Remark \ref{r:conformal} (2) that $\psi \in C^{1,\theta}_{loc}(V)$ and $\sigma \in C^{0,\theta}_{loc}(\psi(V))$. As $\nabla (\varphi\circ \psi^{-1}) \in C^{0, \theta}_c(V)$, the right-hand side of \eqref{eq-distri-gauss} is well-defined as a distributional product thanks to Proposition \ref{c:distrproduct}.

Now assume that $(\psi_1,\sigma_1,V_1)$, $(\psi_2,\sigma_2,V_2)$ are two conformal charts with $\mathrm{spt}(\varphi)\subset V_i$, i.e., $\mathrm{spt}(\varphi)\subset V_1\cap V_2 =: \Omega_0$. We claim that 
\begin{equation*}
        \int\nabla(\varphi\circ\psi_{1}^{-1})\cdot\nabla\sigma_{1}
        =\int\nabla(\varphi\circ\psi_{2}^{-1})\cdot\nabla\sigma_{2}\,.
    \end{equation*}
    For this we consider mapping
    \[f:=\psi_1\circ\psi_2^{-1}: ~~{\psi_2(\Omega_0)}\to{\psi_1(\Omega_0)}.\]
Observe that by definition  
\[ \psi_1^\sharp(e^{2\sigma_1}|dz|^2) = h = \psi_2^\sharp (e^{2\sigma_2}|dz|^2)\]
on $\Omega_0$, so that 
    \begin{align*}
      e^{2\sigma_2}|dz|^2= (\psi_2^{-1})^{\sharp}h=e^{2(\sigma_1\circ f)}f^\sharp |dz|^2\,.
    \end{align*}
Consequently, $f$ is conformal and hence, since it is also orientation preserving, holomorphic. In particular, $f^\sharp |dz|^2 = |f'|^2 |dz|^2$,  
    which implies
    \begin{equation}
        \label{eq-sigma12}
        \sigma_2=\sigma_1\circ f+\ln|f'|
    \end{equation}
    on $\psi_2(\Omega_0)$. Note that $\ln|f'|$ is harmonic, so that     \begin{equation}
        \label{eq-f-harmonic}
        \int_{\psi_2(\Omega_0)}\nabla\ln|f'|\cdot\nabla\Psi dz=0, \text{ for any }\Psi\in C_c^1(\psi_2(\Omega_0)).
    \end{equation}

    Now consider the mollification $(\sigma_2)_{\epsilon}$ of $\sigma_2$ with a standard, radially symmetric mollifier. By \eqref{eq-sigma12} it holds 
    \[ (\sigma_2)_\epsilon = (\sigma_1\circ f)_\epsilon+(\ln|f'|)_\epsilon =(\sigma_1\circ f)_\epsilon+\ln|f'|\,, \]
    since $\ln|f'|$ is harmonic. 
    Since $(\sigma_2)_\epsilon\to \sigma_2$ in $C^{0,\theta'}_{loc}(\psi_2(V_2))$ for any $1/2<\theta'<\theta$ and $\psi_2^{-1}=\psi_1^{-1}\circ f $ we have by continuity of the distributional product and \eqref{eq-f-harmonic}
    \begin{align*}
        \int\nabla(\varphi\circ\psi_{2}^{-1})\cdot\nabla\sigma_{2} &= \lim_{\epsilon\to 0} \int_{\psi_2(\Omega_0)}\nabla(\varphi\circ\psi_{2}^{-1})\cdot\nabla(\sigma_{2})_\epsilon dz \\
        &=\lim_{\epsilon\to 0} \int_{\psi_2(\Omega_0)}\nabla(\varphi\circ\psi_{2}^{-1})\cdot\nabla\left((\sigma_1\circ f)_\epsilon+\ln|f'|\right)dz \\
        &=\lim_{\epsilon\to 0}\int_{\psi_2(\Omega_0)}\nabla(\varphi\circ\psi_{1}^{-1}\circ f)\cdot\nabla\left((\sigma_1)_\epsilon\circ f\right)dz  \\
     &\quad + \lim_{\epsilon\to 0}\int_{\psi_2(\Omega_0)}\nabla(\varphi\circ\psi_{1}^{-1}\circ f)\cdot\nabla\left((\sigma_1\circ f)_\epsilon-(\sigma_1)_\epsilon\circ f\right)dz\\
      &=: I+I\!\!I\,.
    \end{align*}
    By the conformal invariance of the Dirichlet integral we get 
     \[ I = \lim_{\epsilon\to 0}\int_{\psi_1(\Omega_0)}\nabla(\varphi\circ\psi_{1}^{-1})\cdot\nabla(\sigma_1)_\epsilon dz=\int \nabla(\varphi\circ\psi_{1}^{-1})\cdot\nabla\sigma_1 \,. \]
     On the other hand, since 
     \[(\sigma_1\circ f)_\epsilon-(\sigma_1)_\epsilon\circ f=((\sigma_1\circ f)_\epsilon-\sigma_1\circ f) + (\sigma_1\circ f-(\sigma_1)_\epsilon\circ f )\to 0 \] 
     in $C^{0,\theta'}$ for any $1/2<\theta'<\theta$ we get by the continuity of the distributional product that $I\!\!I =0$, proving the claim.
     
For the second assertion, let  $h_\epsilon$ be a sequence of metrics such that $h_\epsilon \to h $  in $C^{0,\theta}_{loc}(\Omega)$ and let $\varphi\in C^{\infty}_c(\Omega)$. Given an open $V\subset\subset \Omega $ such that $\mathrm{spt}\,\varphi \subset V$ we can find global conformal charts $(\psi, \sigma,V)$, $(\psi_\epsilon, \sigma_\epsilon,V)$ for $h$ and $h_\epsilon$ such that \eqref{e:convofcharts} holds with $\Omega $ replaced by $V$. Consequently, from the continuity of the distributional product in Proposition \ref{c:distrproduct} and the convergence \eqref{e:convofcharts} we find
\[ K_hdV_h[\varphi]=2\int \nabla (\varphi\circ \psi^{-1})\cdot\nabla \sigma dz = 2\lim_{\epsilon \to 0} \int \nabla (\varphi\circ \psi_\epsilon^{-1})\cdot\nabla \sigma_\epsilon dz= \lim_{\epsilon\to 0}K_{h_\epsilon}dV_{h_\epsilon}[\varphi] \,.\qedhere \]
\end{proof}

\subsection{Flatness}
We now show the generalization to metrics with low regularity of a classical theorem concerning the flatness of Riemannian manifolds. 

\begin{proposition}\label{cor-kdv-0}
     Let $\theta>1/2$ and $h\in C^{0, \theta}( \bar{\Omega}; \R^{2\times2}_\sym)$ be a metric on a simply connected and bounded domain $\Omega\subset\R^2$. Then there exists an isometric immersion $\phi\in C^{1,\theta}(\bar \Omega,\R^2)$ of $(\Omega,h)$ into $\R^2$ if and only if $K_hdV_h[\varphi]=0$ for  any $\varphi\in C^\infty_{c}(\Omega).$ 
\end{proposition}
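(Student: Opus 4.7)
The plan is to pass to the global conformal chart $(\psi,\sigma,\Omega)$ provided by Lemma \ref{l:charts} and reduce flatness of $h$ to harmonicity of the conformal factor $\sigma$.

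\textbf{Necessity.} Assume $\phi\in C^{1,\theta}(\bar\Omega,\R^2)$ is an isometric immersion. Since $(D\phi)^{\top}D\phi = h$ is positive definite, $\det D\phi\neq 0$, and the inverse function theorem makes $\phi$ a local $C^{1,\theta}$-diffeomorphism; after possibly post-composing with a reflection we may assume it is orientation preserving on a neighborhood $V$ of any prescribed point of $\Omega$. Because $\phi^{\sharp}|dz|^2=h$, the triple $(\phi|_V,0,V)$ is itself a valid conformal chart: its induced conformal factor $\frac{1}{2}\log(\mathrm{tr}(h)/|D\phi|^{2}\circ\phi^{-1})$ vanishes, since $|D\phi|^{2}=\mathrm{tr}(h)$. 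Inserting this chart into \eqref{eq-distri-gauss} yields
\[
K_h dV_h[\varphi]=2\int\nabla(\varphi\circ\phi^{-1})\cdot\nabla 0=0
\]
for every $\varphi\in C^{\infty}_{c}(V)$. A partition-of-unity argument combined with the well-posedness from Proposition \ref{pr-kdv-def} extends this conclusion to all of $\Omega$.

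\textbf{Sufficiency.} Take a global chart $(\psi,\sigma,\Omega)$ from Lemma \ref{l:charts}, with $\psi\in C^{1,\theta}(\bar\Omega)$ and $\sigma\in C^{0,\theta}(\bar\Omega)$. First I would extend the vanishing hypothesis from $C^{\infty}_{c}(\Omega)$ to $C^{1,\theta}_{c}(\Omega)$ by mollification: if $\varphi\in C^{1,\theta}_{c}(\Omega)$ and $\varphi_n=\varphi\ast\rho_{1/n}\in C^{\infty}_{c}(\Omega)$, then $\nabla(\varphi_n\circ\psi^{-1})\to\nabla(\varphi\circ\psi^{-1})$ in $C^{0,\theta'}$ for any $\theta'\in(1/2,\theta)$, and the continuity of the distributional product $\partial_i\sigma$ from Proposition \ref{c:distrproduct} (applied at exponent $\theta'$) gives $K_h dV_h[\varphi_n]\to K_h dV_h[\varphi]$. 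Testing with $\varphi=\tilde\varphi\circ\psi$ for arbitrary $\tilde\varphi\in C^{\infty}_{c}(\psi(\Omega))$ (which lies in $C^{1,\theta}_{c}(\Omega)$) then yields
\[
0 = K_h dV_h[\varphi] = 2\int\nabla\tilde\varphi\cdot\nabla\sigma,
\]
so $\Delta\sigma=0$ distributionally on $U:=\psi(\Omega)$. By Weyl's lemma $\sigma\in C^{\infty}(U)$ and is classically harmonic. Since $U$ is simply connected, $\sigma$ admits a harmonic conjugate $\tau$, and $f:=\sigma+i\tau$ is holomorphic on $U$. Letting $F$ be any primitive of $e^f$, one has $|F'|=e^{\sigma}$ and $F^{\sharp}|dz|^2=e^{2\sigma}|dz|^2$. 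Setting $\phi:=F\circ\psi$, we conclude
\[
\phi^{\sharp}|dz|^2 = \psi^{\sharp}F^{\sharp}|dz|^2 = \psi^{\sharp}(e^{2\sigma}|dz|^2) = h,
\]
so $\phi$ is an isometric immersion of $(\Omega,h)$ into $\R^2$.

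\textbf{Main obstacle.} The hardest step will be upgrading the interior regularity $\phi\in C^{1,\theta}_{loc}(\Omega)$ (immediate from $\psi\in C^{1,\theta}_{loc}$ and the smoothness of $F$ on the open set $U$) to the global regularity $\phi\in C^{1,\theta}(\bar\Omega)$. In complex notation $D\phi(x)=F'(\psi(x))\,D\psi(x)$, and although $|F'(\psi(x))|=e^{\sigma(\psi(x))}\in C^{0,\theta}(\bar\Omega)$ is under control, one still needs H\"older control of the \emph{argument} of $F'$, namely of $\tau\circ\psi$, up to $\partial\Omega$. My plan for this is to exploit the stability statement \eqref{e:convofcharts}: pick smooth $h_\epsilon\to h$ in $C^{0,\theta}(\bar\Omega)$ with conformal charts $(\psi_\epsilon,\sigma_\epsilon,\Omega)$ converging to $(\psi,\sigma,\Omega)$ in the norm of \eqref{e:convofcharts}, replace $\sigma_\epsilon$ by its harmonic Dirichlet extension $\tilde\sigma_\epsilon$ on $\psi_\epsilon(\Omega)$, build the analogous smooth maps $\phi_\epsilon:=F_\epsilon\circ\psi_\epsilon$, and use a Privalov-type $C^{0,\theta}$ estimate for the Hilbert transform on the smoother approximating domains to show that $\{\phi_\epsilon\}$ is Cauchy in $C^{1,\theta}(\bar\Omega)$ with limit $\phi$.
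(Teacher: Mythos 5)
Your proposal follows the paper's proof essentially step for step: for necessity you use $\phi$ itself as a local conformal chart with vanishing conformal factor, together with a partition of unity and the chart--independence of \eqref{eq-distri-gauss} from Proposition \ref{pr-kdv-def}; for sufficiency you pass to the global chart of Lemma \ref{l:charts}, deduce $\Delta\sigma=0$ by testing with $\varphi\circ\psi$ (extending from $C^\infty_c$ to $C^{1,\theta}_c$ test functions by mollification and Proposition \ref{c:distrproduct}, exactly as the paper does implicitly), invoke Weyl's lemma, and build $\phi=F\circ\psi$ from a primitive of $e^{f}$ with $\Re f=\sigma$. Up to and including the construction of $\phi$ this is the paper's argument verbatim, and it is correct.

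The only divergence is your closing paragraph. The paper performs none of the machinery you sketch there: it concludes directly that $\phi=F\circ\psi$ is the desired immersion, with no separate boundary--regularity step. Moreover, the scheme you propose does not look workable as stated: the domains $\psi_\epsilon(\Omega)$ are images of the same, possibly quite irregular, $\Omega$ under $C^{1,\theta}$ diffeomorphisms converging to $\psi$, so they are in no useful sense ``smoother'' than $\psi(\Omega)$, and replacing $\sigma_\epsilon$ by its harmonic Dirichlet extension destroys the identity $h_\epsilon=\psi_\epsilon^\sharp(e^{2\sigma_\epsilon}|dz|^2)$, so the resulting $\phi_\epsilon$ would no longer be isometries for $h_\epsilon$. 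That said, you have correctly isolated the one point the paper treats as immediate: Lemma \ref{l:charts} gives $\psi\in C^{1,\theta}(\bar\Omega)$ and $\sigma\in C^{0,\theta}$ up to the boundary (hence control of $|F'\circ\psi|=e^{\sigma\circ\psi}$), but the H\"older continuity up to $\partial\Omega$ of the argument $\tau\circ\psi$ of $F'\circ\psi$, with $\tau$ the harmonic conjugate, is not explicitly addressed. If one wants to spell this out, the natural tool is not an approximation of the metric but the interior estimate $|\nabla\tau|=|\nabla\sigma|\le C[\sigma]_{0,\theta}\,\mathrm{dist}(\cdot,\partial\psi(\Omega))^{\theta-1}$, valid since $\sigma$ is harmonic and globally H\"older, combined with a Hardy--Littlewood type lemma; in any case your proof and the paper's stand on the same footing here, and everything before this point matches the paper exactly.
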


\begin{proof}
     Let $\phi\in C^{1,\theta}(\bar \Omega,\R^2)$ be an isometric immersion from $( \Omega, h)$ into $\R^2$. As an immersion $\phi$ is a local diffeomorphism, so there exists an open cover $\{V_i\}$ of $\Omega$ such that $\phi\vert_{V_i}$ is a diffeomorphism onto its image for every $i$. Since 
    $h=\phi^\sharp|dz|^2$, $(\phi\vert_{V_i},V_i)$ is a conformal chart (upto composing it with a reflection) with coefficient $\sigma_i = 0$ for every $i$. For a given $\varphi\in C^{\infty}_c(\Omega)$ we can find a finite subcover $V_1,\ldots,V_N$ of $\mathrm{spt }\, \varphi$, and choose  a partition of unity $\{\chi_i\}$ subordinate to the cover. Since the right-hand side in \eqref{eq-distri-gauss} is independent of the choice of conformal chart it follows 
    
    \[ K_hdV_h[\varphi] = \sum_{i=1}^N K_hdV_h[\chi_i\varphi]=2\sum_{i=1}^N \int \nabla(\chi_i\varphi\circ \phi\vert_{V_i}^{-1})\cdot \nabla \sigma_i = 0\,.\]

    Now assume conversely that $K_hdV_h = 0$ as a distribution and let $(\psi,\sigma,\Omega)$ be a global conformal chart for $h$. We claim that $\sigma$ is harmonic on $\psi(\Omega)$. Indeed, fix $\varphi \in C^\infty_c(\psi(\Omega))$, and let $\sigma_\epsilon$ be smooth approximations by mollification in $C^{0,\theta}_{loc}(\psi(\Omega))$ of $\sigma$. Since $\tilde \varphi = \varphi\circ \psi \in C^{1,\theta}_c(\Omega)$ is a valid testfunction in $K_hdV_h$ it follows by the continuity of the distributional product 
    \[ \int_{\psi(\Omega)} \sigma \Delta \varphi dz = -\lim_{\epsilon\to 0} \int_{\psi(\Omega)} \nabla \sigma_\epsilon \cdot \nabla \varphi dz =-\int_{\psi(\Omega)}  \nabla \sigma \cdot \nabla (\tilde \varphi\circ \psi^{-1}) = -\frac12 K_hdV_h[\tilde \varphi ] = 0\,. \]
Hence, by Weyl's lemma for the Laplace equation it follows that $\sigma$ is smooth and harmonic. 
Since $\Omega$ and so $\psi(\Omega)$ are simply connected,  $\sigma$ is the real part of some holomorphic function $f$, i.e., $\sigma=\Re f$. Appealing once more to the simply connectedness of $\psi(\Omega)$ there is a holomorphic function $F\colon\psi(\Omega)\to\C$ which is a primitive of $e^f$, i.e., $F'=e^f$. It satisfies $$|DF|^2= e^{2\Re(f)}= e^{2\sigma}.$$
Consequently, the mapping
\[\phi:=F\circ \psi:\Omega\to\C\]
is an isometric immersion from $(\Omega, h)$ to $\R^2$. 
\end{proof}

\section{Proof of Theorem \ref{th-main}}\label{se-proof-thm}
The proof of our main theorem now follows from Proposition \ref{pr:weak-darboux} and Proposition \ref{cor-kdv-0}. Since the statements are in two parts, we divide the proof into two subsections.

\subsection{Proof of Theorem \ref{th-main} (1)} Fix a test-function $\varphi\in C^{1,\theta}_c(\Omega)$ and let $V\subset \subset \Omega$ be an open set such that $\mathrm{spt} \,\varphi \subset V$. Consider then the mollification $\br_\epsilon$ of the immersion $r$. For $\epsilon$ small enough,  $u_\epsilon=\br_\epsilon\cdot \mathbf{e}$ is well-defined and smooth on $V$ with  $|du_\epsilon|_g<1$ on $\bar V$. Moreover, 
\begin{equation}
    \label{eq-u-appro}
    u_\epsilon\to u \text{ in } C^{0, \theta'}_{loc}( \Omega),
\end{equation}
for any $\theta'\in(1/2, \theta).$   Define 
\[h_\epsilon=g-du_\epsilon^2 \text{ and } h=g-du^2.\]
Then 
\begin{equation}
    \label{eq-heps-conv}
    h_\epsilon\to h \text{ in } C^{0, \theta'}_{loc}( \Omega) \text{ and }  h\in C^{0, \theta'}( \bar \Omega)
\end{equation}
for any $\theta'\in(1/2, \theta).$ With the help of Lemma \ref{le-g-du-k}, we can compute the Gaussian curvature for the metric $h_\epsilon$ as 
\begin{align*}
    K_{h_\epsilon}=\frac{1}{1-|du_\epsilon|_g^2}\left\{K_g-\frac{\det(\nabla du_\epsilon)}{(1-|du_\epsilon|_g^2)\det g}\right\}\,.
\end{align*}
A direct calculation also implies
\[\det h_\epsilon=(1-|du_\epsilon|_g^2)\det g\,.\]
Hence we further have
\begin{equation}
    \label{eq-kdvepsilon}
    K_{h_\epsilon}dV_{h_\epsilon}=K_{h_\epsilon}\sqrt{\det h_\epsilon}dx
    =\left\{\frac{K_g\sqrt{\det g}}{(1-|du_\epsilon|_g^2)^{1/2}}-\frac{\det(\nabla du_\epsilon)}{(1-|du_\epsilon|_g^2)^{3/2}\sqrt{\det g}}\right\}dx.
\end{equation}
With the aid of the convergence of $h_\epsilon$ and $u_\epsilon$, by Proposition \ref{pr-kdv-def}, we have
\begin{equation}
    \label{eq-kdv-con}
    K_{h_\epsilon}dV_{h_\epsilon}[\varphi]\to K_hdV_h[\varphi]\,.
\end{equation}
Since $\br$ is an isometric immersion of $( \Omega, g)$ into $\R^3$ and $|du|_g<1$ in $ \Omega$, the metric $h=g-du^2$ is isometric to $\R^2$, which by Proposition \ref{cor-kdv-0} gives 
\begin{equation}
    \label{eq-kdv0}
     K_hdV_h[\varphi]=0.
\end{equation}
Thus from \eqref{eq-kdvepsilon}, \eqref{eq-kdv-con} and \eqref{eq-kdv0}, we have
\begin{equation}\label{eq-ueps-darb}
    \left(\int_\Omega\frac{\varphi K_g\sqrt{\det g}}{(1-|du_\epsilon|_g^2)^{1/2}}dx-\int_\Omega\frac{\varphi\det(\nabla du_\epsilon)}{(1-|du_\epsilon|_g^2)^{3/2}\sqrt{\det g}}dx\right)\to0.
\end{equation} Combining this with the convergences \eqref{eq-u-appro} and \eqref{e:hessianconvergence}, and the arbitrariness of $\varphi$ yields \eqref{eq-weak-darb} for $u$.

\subsection{Proof of Theorem \ref{th-main} (2)} Consider the metric $h=g-du^2 \in C^{0,\theta}(\bar \Omega;\R^{2\times2}_\sym)$. We claim that the distributional Gaussian curvature vanishes identically. Proposition \ref{cor-kdv-0} then guarantees the existence of an isometric immersion $\phi\in C^{1,\theta}(\bar \Omega,\R^2)$ of $(\Omega,h)$ into $\R^2$. Setting $\br=(\phi, u)$ then yields the desired isomeric immersion of $(\Omega, g)$ into $\R^3$, since 
\[ |d\br|^2 = |d\phi|^2+du^2 = h+du^2 =g\,.\] 

It therefore remains to show that $K_hdV_h[\varphi] =0$. Fix again $\varphi\in C^{1,\theta}_c(\Omega) $ and an open set $V\subset \subset \Omega$ with $\mathrm{spt}\, \varphi\subset V$. Mollifying $u$ to get a smooth function $u_\epsilon$, we also have a smooth metric $h_\epsilon=g-du_\epsilon^2$ on $V$ if $\epsilon$ is small enough. Since $u\in C^{1, \theta}( \bar \Omega),$ we immediately get
\begin{equation}\label{eq-approx-uh}
    u_\epsilon\to u \text{ in }C^{1, \theta'}_{loc}(\Omega), \quad h_\epsilon\to h\text{ in } C^{0, \theta'}_{loc}(\Omega),
\end{equation}
for any $1/2<\theta'<\theta.$ 
It follows from Proposition \ref{pr-kdv-def} and \eqref{eq-kdvepsilon} that
\begin{align*}
       K_hdV_h[\varphi] &= \lim_{\epsilon\to 0} \int  \varphi K_{h_\epsilon}\sqrt{\det h_\epsilon}dx\\ 
       &= \lim_{\epsilon\to 0 }\int \varphi 
    \left\{\frac{K_g\sqrt{\det g}}{(1-|du_\epsilon|^2)^{1/2}}-\frac{\det(\nabla du_\epsilon)}{(1-|du_\epsilon|^2)^{3/2}\sqrt{\det g}}\right\}dx \\
    &=  \frac{K_g}{(1-|du|_g^2)^{1/2}} dV_g[\varphi]-\frac{\det(\nabla^2_g u )}{(1-|du|_g^2)^{3/2}} dV_g[\varphi]=0\,,
\end{align*}
where the last line follows as a consequence of \eqref{eq-approx-uh} in view of \eqref{e:hessianconvergence} and the fact that $u$ is a weak solution to the Darboux equation. Since $\varphi$ was arbitrary, this shows the claim and finishes the proof.

\begin{appendix}

\section{Distributional products}

The proof of Proposition \ref{c:distrproduct} follows as a corollary from the following two lemmas.
\begin{lemma}\label{lem.product}
    The trilinear operator
    \[ \mathcal{S}(\R^n)^3\ni (f,g,h)\mapsto  \int_{\R^n} Df(x) g(x) h(x)\, dx \]
    extends continuously to a trilinear operator on $B^{s}_{p_1,q}(\R^n)\times B^{1-s}_{p_2,q'}(\R^n)\times B^{1-s}_{p_3,q'}(\R^n)$ denoted by $\int Df g h$ for any set of parameters satisfying $\frac{1}{p_1}+ \frac{1}{p_2}+ \frac{1}{p_3}=1$, $\frac{1}{q}+\frac{1}{q'}=1$ and $0<s<1$.
\end{lemma}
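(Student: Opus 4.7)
The natural approach is through the Littlewood--Paley (paraproduct) calculus. Let $\{\Delta_j\}_{j\ge -1}$ denote the standard inhomogeneous dyadic frequency projectors on $\R^n$, with $\widehat{\Delta_j u}$ supported in $\{|\xi|\sim 2^j\}$ for $j\ge 0$ and in $\{|\xi|\lesssim 1\}$ for $j=-1$. For Schwartz functions $f,g,h$ I would expand each factor to write
\[
    \int_{\R^n} Df\cdot g\cdot h\, dx = \sum_{j,k,l\ge -1} I(j,k,l),\quad I(j,k,l):=\int_{\R^n} D\Delta_j f\cdot \Delta_k g\cdot \Delta_l h\, dx.
\]

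The first key observation is a triangle condition on the dyadic scales: since the Fourier transform of a product is the convolution of the individual Fourier transforms, $I(j,k,l)=0$ unless the largest of $j,k,l$ is comparable to the second largest. I would therefore split the triple sum into three symmetric regimes according to which pair of indices is comparable and on top. On each regime I would estimate a single summand by H\"older's inequality in physical space (using $1/p_1+1/p_2+1/p_3=1$) combined with Bernstein's inequality to pay for the derivative:
\[
    |I(j,k,l)|\lesssim 2^j \|\Delta_j f\|_{L^{p_1}}\|\Delta_k g\|_{L^{p_2}}\|\Delta_l h\|_{L^{p_3}}.
\]

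Splitting $2^j=2^{js}\cdot 2^{j(1-s)}$ and extracting the Besov coefficient sequences
\[
    a_j:=2^{js}\|\Delta_j f\|_{L^{p_1}},\quad b_k:=2^{k(1-s)}\|\Delta_k g\|_{L^{p_2}},\quad c_l:=2^{l(1-s)}\|\Delta_l h\|_{L^{p_3}},
\]
the remaining geometric weights are summable precisely because $0<s<1$ (both directions of decay are needed). Summing in the smallest index first and then applying H\"older's inequality for sequences with exponents $q,q',q'$ (using $1/q+1/q'=1$) would yield the trilinear bound
\[
    \Big|\int_{\R^n} Df\cdot g\cdot h\, dx\Big|\lesssim \|f\|_{B^{s}_{p_1,q}}\|g\|_{B^{1-s}_{p_2,q'}}\|h\|_{B^{1-s}_{p_3,q'}}.
\]
The continuous extension to arbitrary Besov arguments would then follow by density of Schwartz functions in these spaces (handling the endpoints $q=\infty$ or $q'=\infty$ via the usual weak-$*$ argument).

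\textbf{The hardest step} will be the regime in which $j$ is the maximal scale, so that the derivative lands on the highest-frequency factor and the Bernstein prefactor $2^j$ is large. There one has to check that the leftover $2^{(j-k)(1-s)}$ (or $2^{(j-l)(1-s)}$) is absorbed by the triangle restriction $j\le \max(k,l)+O(1)$, and that the lowest-frequency factor contributes a summable geometric series. The strict inequality $s<1$ is exactly what is needed for decay at high frequencies, while $s>0$ takes care of the low-frequency summation. The other two regimes, where the derivative sits on a factor of lower or comparable frequency than the others, are routine by comparison.
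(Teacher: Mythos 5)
Your argument is correct, but it takes a genuinely different route from the paper. You work on the Fourier side: dyadic decomposition, the frequency-support (quasi-orthogonality) constraint forcing the two largest indices to be comparable, Bernstein to pay for the derivative, and a case analysis over the three resonance regimes, with H\"older in $\ell^q\times\ell^{q'}$ pairing the two comparable high-frequency blocks and a convergent geometric series absorbing the lowest block. The paper instead works entirely on the physical side: it extends $f,g,h$ to $\R^{n+1}_+$ by mollification, $F(x,t)=\Phi_t\ast f(x)$, invokes the trace characterization of Besov seminorms through weighted bounds such as $\bigl\|t^{|\alpha|-s}\|\partial^\alpha F(\cdot,t)\|_{L^p}\bigr\|_{L^q(\R_+,dt/t)}\lesssim [f]_{B^s_{p,q}}$, rewrites the boundary pairing via Stokes' theorem as a bulk integral of $d(F\star dx^j)\wedge d(GH)$, and concludes with H\"older in $x$ and in $t$ (with respect to $dt/t$, exponents $q,q'$). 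The two proofs encode the same cancellation — every term carries at least two derivatives, one on the factor of regularity $s$ and one on a factor of regularity $1-s$, whose weights $t^{1-s}$ and $t^{s}$ (respectively $2^{j(1-s)}$ and $2^{-k(1-s)}$) combine favourably — but the continuous version gets the ``at least two derivatives'' for free from the wedge product containing a $dt$, avoiding your case analysis and the triangle condition, and it meshes with the rest of the paper, where the same mollification produces the approximation and convergence statements. Your version is closer to the standard paraproduct literature and makes the role of the resonance set more transparent. Two small caveats: your attribution of $s>0$ to the low-frequency summation is imprecise — the sum over the smallest index in the regime where $f$ sits at top frequency needs $\sum_l\|\Delta_l h\|_{L^{p_3}}<\infty$, which comes from $1-s>0$, i.e.\ again from $s<1$ — and, as you note yourself, for $p_i=\infty$ or $q=\infty$ the ``extension'' should be taken as the direct definition by the absolutely convergent series rather than by density; the paper's formula-based definition sidesteps this in the same way. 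Neither point is a gap.
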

Here, $B^{s}_{p,q}(\R^n)$ denotes the (inhomogenous) Besov-space. Even though the result is probably well known and perhaps not optimal, we will present a short proof for the sake of completeness. 

\begin{proof}
    Let us denote with $F(x,t)=\Phi_t *f(x) ,G(x,t)=\Phi_t *g(x),H(x,t)=\Phi_t *h(x)$ be the smooth extensions of $f(x),g(x),h(x)$ to $\R^{n+1}_+$ having the ``good trace'' property, i.e.,
    \begin{align*}
        \norm{t^{|\alpha|-s}\norm{\partial^\alpha F(\cdot, t)}_{L^p}}_{L^q(\R_+, \frac{dt}{t})}&\leq C [f]_{B^{s}_{p,q}} \text{ for } \alpha \in \N_0^{n+1}  \text{ with } |\alpha|>0\,,\\
       \sup_{t} \norm{F(\cdot ,t)}_{L^p} &\leq C [f]_{B^{s}_{p,q}}\,.
    \end{align*}
    The first estimate can for instance be found in \cite[Theorem 1.4]{MironescuRuss2015} and the second is a simple consequence of $F$ being a mollification of $f$. 
    Having these extensions at hand the proof is straight forward. Recall that $\partial_j f g h \, dx = gh d(f\star_{\R^n} dx^j)$ and properties of mollifier so that 
    \begin{align*}
        &\lvert \int \partial_j f g h \,dx\rvert = \lvert \int_{\R^n\times\{0\}} GHd(F\star_{\R^n} dx^j)\rvert = \lvert \int_0^\infty \int d (F \star_{\R^n} dx^j) \wedge d(GH) \rvert\\
         \le & \int_{0}^\infty t \norm{\partial_j F(\cdot, t)}_{L^{p_1}}\left( \norm{\partial_t G(\cdot, t)}_{L^{p_2}}\norm{H(\cdot,t)}_{L^{p_3}}+ \norm{ G(\cdot, t)}_{L^{p_2}}\norm{\partial_t H(\cdot,t)}_{L^{p_3}}\right) \,\frac{dt}{t}\\
        &\quad+\int_{0}^\infty t \norm{\partial_t F(\cdot, t)}_{L^{p_1}}\left( \norm{\partial_j G(\cdot, t)}_{L^{p_2}}\norm{H(\cdot,t)}_{L^{p_3}}+ \norm{ G(\cdot, t)}_{L^{p_2}}\norm{\partial_j H(\cdot,t)}_{L^{p_3}}\right) \,\frac{dt}{t}  \\
        \le& 2\left(\sup_{t}\norm{H(\cdot,t)}_{L^{p_3}}\right) \norm{t^{1-s}\norm{D F(\cdot, t)}_{L^{p_1}}}_{L^{q}(\R_+, \frac{dt}{t})} \norm{t^{1-(1-s)}\norm{D G(\cdot, t)}_{L^{p_2}}}_{L^{q'}(\R_+, \frac{dt}{t})}\\
        &\quad+2\left(\sup_{t}\norm{G(\cdot,t)}_{L^{p_2}}\right) \norm{t^{1-s}\norm{D F(\cdot, t)}_{L^{p_1}}}_{L^{q}(\R_+, \frac{dt}{t})} \norm{t^{1-(1-s)}\norm{D H(\cdot, t)}_{L^{p_3}}}_{L^{q'}(\R_+, \frac{dt}{t})}\\
        \leq & C [f]_{B^{s}_{p_1,q}}[g]_{B^{1-s}_{p_2,q'}}[h]_{B^{1-s}_{p_3,q'}}\,.
    \end{align*}
    Together with the tri-linearity of the product on the left the claim follows.  
\end{proof}

Recall  the following standard embedding (cf. \cite[Theorem 2.7.1]{Triebel} ) lemma.
\begin{lemma}\label{lem.embedding}
   $C^{0, \theta}(\R^n)$ embeds continuously into $B^{s}_{\infty, q}(\R^n)$ for any $s<\theta$ and $q\ge 1$.  
\end{lemma}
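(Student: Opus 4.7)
The plan is a direct verification via the Slobodeckij-type finite-difference characterization of the inhomogeneous Besov space. For $0<s<1$ and $1\le q<\infty$ one has the equivalent norm
\[
\|f\|_{B^s_{\infty,q}(\R^n)} \sim \|f\|_{L^\infty(\R^n)} + \left(\int_{|h|\le 1} \frac{\|f(\cdot+h)-f(\cdot)\|_{L^\infty}^q}{|h|^{n+sq}}\,dh\right)^{1/q}
\]
(see \cite[Theorem 2.5.12]{Triebel}), with the obvious modification (supremum over $|h|\le 1$) for $q=\infty$. For $f\in C^{0,\theta}(\R^n)$ the $L^\infty$-term is bounded trivially by $\|f\|_{C^{0,\theta}}$, while the H\"older bound $\|f(\cdot+h)-f(\cdot)\|_{L^\infty}\le [f]_{0,\theta}|h|^{\theta}$ reduces the seminorm to the elementary integral
\[
\int_{|h|\le 1} |h|^{q(\theta-s)-n}\,dh \;=\; |S^{n-1}|\int_0^1 r^{q(\theta-s)-1}\,dr,
\]
which is finite precisely because $s<\theta$ (and $q\ge 1$). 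Combining these two estimates yields $\|f\|_{B^s_{\infty,q}}\le C(n,s,\theta,q)\|f\|_{C^{0,\theta}}$, which is the asserted continuous embedding.

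The only (minor) obstacle is justifying the above equivalent norm at $p=\infty$, since the Slobodeckij-type characterization is classical but often stated only for $p<\infty$ in textbooks. If one wants to avoid this technical point, an alternative is to use the mollification characterization already employed in the proof of Lemma \ref{lem.product}. Setting $F(x,t)=(\Phi_t*f)(x)$ and splitting the $t$-integral into small-scale $(0,1]$ and large-scale $(1,\infty)$ pieces, one obtains on the former $\|\nabla F(\cdot,t)\|_{L^\infty}\le C[f]_{0,\theta}\,t^{\theta-1}$ by exploiting the cancellation $\int\nabla\Phi_t=0$ together with the H\"older regularity of $f$, and on the latter $\|\nabla F(\cdot,t)\|_{L^\infty}\le C\|f\|_{L^\infty}\,t^{-1}$ by Young's inequality. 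The resulting weighted quantities $t^{1-s}\|\nabla F(\cdot,t)\|_{L^\infty}$ behave like $t^{\theta-s}$ on $(0,1]$ and $t^{-s}$ on $(1,\infty)$, and both are $L^q(dt/t)$-integrable on the respective ranges thanks to $\theta>s$ and $s>0$, yielding the required bound $\|f\|_{B^s_{\infty,q}}\le C\|f\|_{C^{0,\theta}}$.
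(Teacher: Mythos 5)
Your argument is correct, but it takes a different route from the paper, which does not prove the lemma at all: it simply cites the standard Besov embedding theorem (Theorem 2.7.1 in Triebel), implicitly combined with the identification of $C^{0,\theta}$ with $B^{\theta}_{\infty,\infty}$ (for $\theta\in(0,1)$), so that the claim reduces to $B^{\theta}_{\infty,\infty}\hookrightarrow B^{s}_{\infty,q}$ for $s<\theta$. Your first route makes this self-contained via the first-difference (Slobodeckij-type) characterization; the computation $\int_{|h|\le 1}|h|^{q(\theta-s)-n}\,dh<\infty$ for $q(\theta-s)>0$ is exactly the point where the strict inequality $s<\theta$ is used, and the characterization you invoke is indeed valid at $p=\infty$ for $0<s<1$ with first differences (Triebel, Theorem 2.5.12, requires only $s>n\max(1/p-1,0)=0$ and differences of order $M>s$). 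Your second route is arguably the more natural one for this paper, since it reuses the mollification extension $F(x,t)=\Phi_t*f(x)$ already set up in the proof of Lemma \ref{lem.product}: the cancellation $\int\nabla\Phi_t=0$ gives $\|\nabla F(\cdot,t)\|_{L^\infty}\lesssim [f]_{0,\theta}t^{\theta-1}$ for $t\le 1$ and Young's inequality gives the $t^{-1}$ decay for $t\ge 1$, and both weighted quantities lie in $L^q(dt/t)$ precisely because $0<s<\theta$. The only caveat there is that you need the \emph{converse} direction of the extension characterization (finiteness of the weighted extension norms implies $f\in B^{s}_{\infty,q}$), whereas the proof of Lemma \ref{lem.product} only uses the forward direction; this converse is likewise standard and contained in the cited references, but it is worth stating explicitly which implication you are relying on. In sum: the paper buys brevity by citation, while your proof buys self-containedness at the cost of invoking one equivalent-norm theorem.
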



\begin{proof}[Proof of Proposition \ref{c:distrproduct}]
    This is a direct consequence of the previous Lemma \ref{lem.product} and Lemma \ref{lem.embedding} after using the classical Whitney extension theorem to obtain compactly supported extensions $\hat{f},\hat{g}$ with same norms bounds. 
\end{proof}

\section{Conformal charts: A proof of Lemma \ref{l:charts}}

    We first extend $h$ to $\tilde  h\in C^{0,\theta}(\C,\R^{2\times2}_\sym)$. By continuity, $\tilde h $ is a metric on $\bar\Omega'$ for some open, bounded $\Omega' \supset\supset \Omega $. Let $\mu_{\tilde h} $ denote the Beltrami coefficient for $\tilde h$, which satisfies $\mu_{\tilde h } \in C^{0,\theta}(\bar \Omega')$ and $|\mu_{\tilde h} |\leq k <1$ on $\Omega'$. Choose a cutoff function $\eta \in C^\infty_c(\Omega')$ such that $0\leq\eta \leq 1$ and $\eta = 1 $ on $\Omega$ and set $\mu = \eta\mu_{\tilde h}$, so that $\mu\in C^{0,\theta}_c(\C)$ with $|\mu|\leq k <1 $ everywhere. Now let $\psi $ be the principal solution to the Beltrami equation $\psi_{\bar z} = \mu \psi_z$ (cf. Theorem 5.1.2 in \cite{AIM2009}). It is a homeomorphism (by Theorem 5.3.2 of \cite{AIM2009}) and moreover $\psi \in C^{1,\theta}_{loc}(\C)$ by Theorem 15.6.2 of \cite{AIM2009}. Theorem 5 in \cite{Ahlfors} and $|\mu|\leq k<1$ then imply that the Jacobian $J\psi = |\psi_z|^2 - |\psi_{\bar z}|^2$ is everywhere positive, so that $\psi $ is an orientation preserving diffeomorphism. Since on $\Omega $ it holds $\mu = \mu_{\tilde h}$ it follows that $(\psi,\Omega)$ is a conformal chart. 
    
In the same way we find global conformal charts $(\psi_\epsilon,\sigma_\epsilon)$ for the metrics $h_\epsilon$. Let $f^\epsilon= \psi_\epsilon-\psi \in C^{1,\theta}_{loc}(\C)$ and observe 
    \[ f^\epsilon_{\bar z} = \mu f^\epsilon_{z} + (\mu-\mu_\epsilon)f^\epsilon_z.\]
    We would like to employ Theorem 15.0.6 in \cite{AIM2009} to the above inhomogeneous Beltrami equation to show $f^\epsilon\to0$ as $\epsilon\to0$. To do this we require the uniform boundedness of $f^\epsilon$ in $C^{1, \theta}_{loc}(\C)$ and $L^2(\C)$. Observe that for any  principal solution $\zeta$ to the Beltrami equation $\psi_{\bar z } = \mu \psi_z$ with $\mu\in C^{0, \theta}_c(\C) $, one has $$\|\zeta-z\|_{L^\infty(\C)}\leq C \|\mu\|_{L^\infty(\C)}$$ as a consequence of Theorem 4.3.11 in \cite{AIM2009} (where the constant $C$ depends on the support of $\mu$). Note that $\tilde\zeta= \zeta-z $ satisfies $D{\tilde\zeta}\in L^2(\C)$ and $$\tilde\zeta_{\bar z} = \mu \tilde\zeta_z + \mu. $$ 
    Hence by the Schauder estimates in Theorem 15.0.6 in \cite{AIM2009} we get 
    \[  [D\zeta]_{C^{0, \theta}(\C)} = [D{\tilde\zeta}]_{C^{0, \theta}(\C)}\leq C\left([\mu]_\theta + ([\mu]_\theta)^{1+1/\theta} \|\mu\|_\infty\right)\,.\]
     Combining with the $L^\infty$ bound and  interpolation, in our case we see that $\psi_\epsilon,\psi$ and so also $f^\epsilon$ are uniformly (in $\epsilon$) bounded in $C^{1,\theta}(K_0)$ for any compact $K_0\subset \C$. 

   Moreover, from the fact that $\psi_\epsilon $ and $\psi $ are principal solutions it follows that $Df^\epsilon\in L^2(\C)$. Consequently, the Schauder estimates in Theorem 15.0.6 in \cite{AIM2009} imply 
   \begin{align*} [Df^\epsilon]_{C^{0,\theta}(\C)} &\leq C\left( [(\mu-\mu_\epsilon)f^\epsilon_z]_{C^{0, \theta}(\C)} +([\mu]_{C^{0, \theta}(\C)})^{1+1/\theta}\|(\mu-\mu_\epsilon)f^\epsilon_z\|_\infty \right)\\
   & \leq C \|\mu-\mu_\epsilon\|_{C^{0,\theta}(\bar \Omega')} \|f^\epsilon\|_{C^{1,\theta}(\bar \Omega')} \leq C\|\mu-\mu_\epsilon\|_{C^{0,\theta}(\bar \Omega')} \to 0.\end{align*}
Since moreover $f^\epsilon \to 0 $ locally uniformly (cf. Lemma 5.3.5 in \cite{AIM2009}) it follows by interpolation that $f^\epsilon \to 0$ in $C^{1,\theta}_{loc}(\C)$. Appealing to the relation between $\psi$ and $\sigma$ in Definition \ref{de-conformal}, we also have $\sigma_\epsilon\to\sigma$ in $C^{0, \theta}_{loc}(\C)$ as $\epsilon\to0$, proving the claim.

\end{appendix}

\section*{Acknowledgement}
Wentao Cao's research was supported by Grant Agreement No. 12471224 of the National Natural Science Foundation of China.  He also thanks the hospitality of
Max Planck Institute for Mathematics in the Sciences (MPI MIS) since part of the work was completed when he was visiting MPI MIS.

\bibliographystyle{plain}

\begin{thebibliography}{10}

\bibitem{Ahlfors}
Lars Ahlfors and Lipman Bers.
\newblock Riemann's mapping theorem for variable metrics.
\newblock {\em Ann. of Math. (2)}, 72:385--404, 1960.

\bibitem{AIM2009}
Kari Astala, Tadeusz Iwaniec, and Gaven Martin.
\newblock {\em Elliptic Partial Differential Equations and Quasiconformal
  Mappings in the Plane (PMS-48)}.
\newblock Princeton University Press, 2009.

\bibitem{Borisov1958}
Yurii~F. Borisov.
\newblock {The parallel translation on a smooth surface. I--IV}.
\newblock {\em Vestnik Leningrad. Univ.}, 13,14, 1958,1959.

\bibitem{Borisov2004}
Yurii~F. Borisov.
\newblock {Irregular $C^{1,\beta}$-Surfaces with an Analytic Metric}.
\newblock {\em Siberian Mathematical Journal}, 45(1):19--52, 2004.

\bibitem{CHI}
Wentao Cao, Jonas Hirsch, and Dominik Inauen.
\newblock {$C^{1,{1/3}-}$} very weak solutions to the two dimensional
  {M}onge-{A}mp\`ere equation.
\newblock {\em Calc. Var. Partial Differential Equations}, 64(5):Paper No. 160,
  22, 2025.

\bibitem{CHI2025}
Wentao Cao, Jonas Hirsch, and Dominik Inauen.
\newblock A nash-kuiper theorem for isometric immersions beyond borisov's
  exponent.
\newblock {\em arXiv preprint arXiv:2503.13867}, 2025.

\bibitem{CaoIn24}
Wentao Cao and Dominik Inauen.
\newblock Rigidity and flexibility of isometric extensions.
\newblock {\em Comment. Math. Helv.}, 99(1):39--80, 2024.

\bibitem{CaoSz}
Wentao Cao and L\'{a}szl\'{o} Sz\'{e}kelyhidi.
\newblock Very weak solutions to the two-dimensional {M}onge-{A}mp\'{e}re
  equation.
\newblock {\em Sci. China Math.}, 62(6):1041--1056, 2019.

\bibitem{CaoSz2022}
Wentao Cao and L\'{a}szl\'{o} Sz\'{e}kelyhidi, Jr.
\newblock Global {N}ash-{K}uiper theorem for compact manifolds.
\newblock {\em J. Differential Geom.}, 122(1):35--68, 2022.

\bibitem{CohnVossen1927}
Stefan Cohn-Vossen.
\newblock Zwei satze uber die starrheit der ei achen.
\newblock {\em Nachrichten Ges. d. Wiss zu Gottingen}, 102(1):--125--134, 1927.

\bibitem{CDS2012}
Sergio Conti, Camillo De~Lellis, and L\'{a}szl\'{o} Sz\'{e}kelyhidi, Jr.
\newblock {$h$}-principle and rigidity for {$C^{1,\alpha}$} isometric
  embeddings.
\newblock In {\em Nonlinear partial differential equations}, volume~7 of {\em
  Abel Symp.}, pages 83--116. Springer, Heidelberg, 2012.

\bibitem{Darboux}
Gaston Darboux.
\newblock {\em Le\c cons sur la th\'eorie g\'en\'erale des surfaces. {I},
  {II}}.
\newblock Les Grands Classiques Gauthier-Villars. [Gauthier-Villars Great
  Classics]. \'Editions Jacques Gabay, Sceaux, 1993.
\newblock G\'en\'eralit\'es. Coordonn\'ees curvilignes. Surfaces minima.
  [Generalities. Curvilinear coordinates. Minimum surfaces], Les congruences et
  les \'equations lin\'eaires aux d\'eriv\'ees partielles. Les lignes trac\'ees
  sur les surfaces. [Congruences and linear partial differential equations.
  Lines traced on surfaces], Reprint of the second (1914) edition (I) and the
  second (1915) edition (II), Cours de G\'eom\'etrie de la Facult\'e{} des
  Sciences. [Course on Geometry of the Faculty of Science].

\bibitem{DI2020}
Camillo De~Lellis and Dominik Inauen.
\newblock {$C^{1, \alpha}$} isometric embeddings of polar caps.
\newblock {\em Adv. Math.}, 363:106996, 39, 2020.

\bibitem{DIS2018}
Camillo De~Lellis, Dominik Inauen, and L\'{a}szl\'{o} Sz\'{e}kelyhidi, Jr.
\newblock A {N}ash-{K}uiper theorem for {$C^{1,1/5-\delta}$} immersions of
  surfaces in 3 dimensions.
\newblock {\em Rev. Mat. Iberoam.}, 34(3):1119--1152, 2018.

\bibitem{DePakzad-2024}
Camillo De~Lellis and Mohammad~Reza Pakzad.
\newblock The geometry of $c^{1, \alpha}$ flat isometric immersions.
\newblock {\em Proceedings of the Royal Society of Edinburgh: Section A
  Mathematics}, pages 1--39, 2024.

\bibitem{DeLellisPakzad}
Camillo De~Lellis and Mohammad~Reza Pakzad.
\newblock The geometry of $c^{1,\alpha}$ flat isometric immersions.
\newblock {\em Proceedings of the Royal Society of Edinburgh Section A:
  Mathematics}, 2024.
\newblock Publisher Copyright: Copyright {\textcopyright} The Author(s), 2024.
  Published by Cambridge University Press on behalf of The Royal Society of
  Edinburgh.

\bibitem{Efimov}
N.~V. Efimov.
\newblock Impossibility of an isometric imbedding in {E}uclidean {$3$}-space of
  certain manifolds with negative {G}aussian curvature.
\newblock {\em Dokl. Akad. Nauk SSSR}, 146:296--299, 1962.

\bibitem{Han-Hong-book}
Qing Han and Jia-Xing Hong.
\newblock {\em Isometric embedding of {R}iemannian manifolds in {E}uclidean
  spaces}, volume 130 of {\em Mathematical Surveys and Monographs}.
\newblock American Mathematical Society, Providence, RI, 2006.

\bibitem{Herg1943}
G.~Herglotz.
\newblock \" uber die starrheit der ei achen.
\newblock {\em Abh. Math. Sem. Univ. Hamburg}, 15(1):127--129, 1943.

\bibitem{Hilbert}
David Hilbert.
\newblock Ueber {F}l\"achen von constanter {G}aussscher {K}r\"ummung.
\newblock {\em Trans. Amer. Math. Soc.}, 2(1):87--99, 1901.

\bibitem{Hong1999}
Jiaxing Hong.
\newblock Darboux equations and isometric embedding of {Riemannian} manifolds
  with nonnegative curvature in {{\(\mathbb{R}^3\)}}.
\newblock {\em Chin. Ann. Math., Ser. B}, 20(2):123--136, 1999.

\bibitem{InLew25}
Dominik Inauen and Marta Lewicka.
\newblock The monge-amp\`ere system in dimension two is fully flexible in
  codimension two.
\newblock {\em arXiv preprint arXiv:2503.13867}, 2025.

\bibitem{IL25}
Dominik Inauen and Marta Lewicka.
\newblock {The Monge-Ampere system in dimension two and codimension three}.
\newblock {\em arXiv preprint arXiv:2501.12474}, 2025.

\bibitem{Iwaniec}
Tadeusz Iwaniec.
\newblock On the concept of the weak {J}acobian and {H}essian.
\newblock In {\em Papers on analysis}, volume~83 of {\em Rep. Univ.
  Jyv\"{a}skyl\"{a} Dep. Math. Stat.}, pages 181--205. Univ. Jyv\"{a}skyl\"{a},
  Jyv\"{a}skyl\"{a}, 2001.

\bibitem{Khuri2007}
Marcus~A. Khuri.
\newblock Local solvability of degenerate {Monge}-{Amp{\`e}re} equations and
  applications to geometry.
\newblock {\em Electron. J. Differ. Equ.}, 2007:37, 2007.
\newblock Id/No 65.

\bibitem{Kui1955}
Nicolaas~H Kuiper.
\newblock {On $C^1$-isometric imbeddings. I, II}.
\newblock {\em Nederl. Akad. Wetensch. Indag. Math.}, 17:545--556, 683--689,
  1955.

\bibitem{Martasystems3}
Marta Lewicka.
\newblock The {M}onge-{A}mp\`ere system in dimension two: a regularity
  improvement.
\newblock {\em J. Funct. Anal.}, 289(8):Paper No. 111064, 32, 2025.

\bibitem{MartaSystems}
Marta Lewicka.
\newblock {The {M}onge-{A}mp\`ere system: convex integration in arbitrary
  dimension and codimension}.
\newblock {\em SIAM J. Math. Anal.}, 57(1):601--636, 2025.

\bibitem{LewickaPakzad}
Marta Lewicka and Mohammad~Reza Pakzad.
\newblock {Convex integration for the {M}onge-{A}mp\`ere equation in two
  dimensions}.
\newblock {\em Anal. PDE}, 10(3):695--727, 2017.

\bibitem{MironescuRuss2015}
Petru Mironescu and Emmanuel Russ.
\newblock Traces of weighted {Sobolev} spaces. {Old} and new.
\newblock {\em Nonlinear Anal., Theory Methods Appl., Ser. A, Theory Methods},
  119:354--381, 2015.

\bibitem{Nash1954}
John Nash.
\newblock {$C^1$} isometric imbeddings.
\newblock {\em Ann. of Math. (2)}, 60:383--396, 1954.

\bibitem{Pakzad}
Mohammad~Reza Pakzad.
\newblock Convexity of weakly regular surfaces of distributional nonnegative
  intrinsic curvature.
\newblock {\em J. Funct. Anal.}, 287(11):Paper No. 110616, 24, 2024.

\bibitem{Triebel}
Hans Triebel.
\newblock {\em Theory of function spaces}.
\newblock Modern Birkh\"auser Classics. Birkh\"auser/Springer Basel AG, Basel,
  2010.
\newblock Reprint of 1983 edition [MR0730762], Also published in 1983 by
  Birkh\"auser Verlag [MR0781540].

\end{thebibliography}

\end{document}